\numberwithin{equation}{section}
\title{Backbone probability of planar Brownian motion}
\date{}
\author{Gefei Cai\thanks{Peking University.}\qquad Zhuoyan Xie$^*$}
\newcommand{\R}{\mathbbm{R}}
\newcommand{\A}{\mathbbm{A}}
\newcommand{\D}{\mathbbm{D}}
\newcommand{\C}{\mathbbm{C}}
\newcommand{\cC}{\mathcal{C}}
\newcommand{\sm}{\mathsf{m}}
\newcommand{\E}{\mathbbm{E}}
\newcommand{\1}{\mathbf{1}}
\renewcommand{\P}{\mathbbm{P}}
\renewcommand{\S}{\mathbbm{S}}
\newcommand{\hH}{\mathbbm{H}}
\newcommand{\LF}{\mathrm{LF}}
\newcommand{\CR}{\mathrm{CR}}
\newcommand{\Mod}{\mathrm{Mod}}
\newcommand{\Bac}{\mathsf{Bac}}
\DeclareMathOperator{\SLE}{SLE}
\newcommand{\dist}{\mathrm{dist}}
\newcommand{\lp}{\mathrm{loop}}
\newcommand{\Conf}{\operatorname{Conf}}
\newcommand{\Haar}{\operatorname{Haar}}
\newcommand{\Unif}{\operatorname{Unif}}
\newcommand{\wt}{\widetilde}
\newtheorem{theorem}{Theorem}[section]
\newtheorem{definition}[theorem]{Definition}
\newtheorem{lemma}[theorem]{Lemma}
\newtheorem{corollary}[theorem]{Corollary}
\newtheorem{proposition}[theorem]{Proposition}
\newtheorem{remark}[theorem]{Remark}
\newcommand\ol[1]{\overline{#1}}
\def\@rst #1 #2other{#1}
\newcommand\MR[1]{\relax\ifhmode\unskip\spacefactor3000 \space\fi
  \MRhref{\expandafter\@rst #1 other}{#1}}
\newcommand{\MRhref}[2]{\href{http://www.ams.org/mathscinet-getitem?mr=#1}{MR#2}}
\def\MR#1{\href{http://www.ams.org/mathscinet-getitem?mr=#1}{MR#1}}
\begin{document}

\maketitle

\begin{abstract}
Motivated by critical planar percolation, we investigate a ``backbone'' event of planar Brownian motion, i.e.~the existence of two disjoint subpaths on the Brownian trajectory connecting the $\varepsilon$-neighborhood of the starting point to a macroscopic distance. We show that the probability of~this event is $C(\log|\log\varepsilon|)^{-1}(1+o(1))$ as $\varepsilon\to0$ for some constant $C\in(0,\infty)$.
\end{abstract}

\section{Introduction}
\label{section-intro}
\subsection{Overview and the main result}

There is a strong relation between the planar Brownian motion and critical percolation. In particular, for $n\ge 1$, the Brownian intersection exponent $\zeta_n=\frac{4n^2-1}{12}$~\cite{lsw-bm-exponents1,lsw-bm-exponents2,lsw-bm-exponents3} is the same as the alternating $2n$-arm exponent of critical percolation~\cite{smirnov-werner-percolation}. This can be well understood in the view of conformal restriction~\cite{lsw-restriction}: both of the \emph{hulls} of Brownian motion and percolation cluster satisfy the restriction property, and therefore, their outer boundaries can both be described by Schramm-Loewner evolution (SLE) with parameter $\kappa=\frac{8}{3}$.

Note that in the critical percolation, one can also define the \emph{monochromatic} arm exponents corresponding to that there exist $n$ disjoint arms of the same color joining two boundaries of an annulus. Based on the relation between Brownian motion and critical percolation, it is natural to explore the analog of such monochromatic arm exponents in the context of Brownian motion. Recently,~\cite{nolin2024backboneexponenttwodimensionalpercolation} derives the exact value of percolation monochromatic 2-arm exponent, namely the \emph{backbone exponent}. See references therein for more background on the backbone exponent of percolation. 

In this paper, we investigate the Brownian counterpart, and find that such Brownian backbone probability indeed has an iterated logarithmic decay (so the ``Brownian backbone exponent" is equal to 0). This also shows a big difference between Brownian motion and critical percolation, although their outer boundaries (or hulls) are the same.

To be precise, let $(B_t)_{t\ge0}$ be a planar Brownian motion starting from $0$. Denote $\S^1:=\{z\in\C:|z|=1\}$ and $\D:=\{z\in\C:|z|<1\}$ to be the unit circle and the unit disk, respectively. Let $\tau_\D$ be the first hitting time of $\S^1$ for $(B_t)_{t\ge0}$.
For each $\varepsilon\in(0,\frac{1}{2})$, consider the event
\begin{equation}\label{eq:def-backbone}
\Bac_\varepsilon=\{\exists\ \text{two disjoint subpaths on the trajectory } B[0,\tau_{\D}] \text{ joining } \varepsilon\S^1 \text{ and } \frac{1}{2}\S^1\}.
\end{equation}
Namely, $\Bac_\varepsilon$ happens if there are two continuous curves $\gamma^1,\gamma^2:[0,1]\to\C$ such that $\gamma^1[0,1]\cap\gamma^2[0,1]=\emptyset$, $\gamma^i[0,1]\subset B[0,\tau_{\D}]$, $\gamma^i(0)\in\varepsilon\S^1$, and $\gamma^i(1)\in\frac{1}{2}\S^1$ for $i=1,2$. Using terminology from percolation, we call $\Bac_\varepsilon$ the backbone event for the Brownian trajectory $B[0,\tau_\D]$.

The main result in this paper is the following iterated logarithmic decay for the probability $\P[\Bac_\varepsilon]$.

\begin{theorem}\label{thm:backbone}
There exists a constant $C\in(0,\infty)$ such that,
\begin{equation}\label{eq:bb-proba}
\lim\limits_{\varepsilon\to0}\P[\Bac_\varepsilon]\cdot(\log|\log\varepsilon|)=C.
\end{equation}
\end{theorem}
In fact, our proof gives an explicit expression of the constant $C$ in~\eqref{eq:bb-proba} via the quantity $S$ defined in~\eqref{eq:s}; see~\eqref{eq:c-s} in Section~\ref{section-backbone-sharp}.
\begin{remark}
The same result also holds when $\frac{1}{2}\S^1$ in~\eqref{eq:def-backbone} is changed to any fixed $r\S^1$ with $r\in(0,1)$ (and for sufficiently small $\varepsilon$), except that the corresponding constant in~\eqref{eq:bb-proba} will depend on $r$.
\end{remark}

Note that the backbone event $\Bac_\varepsilon$ is closely related to the spatial distribution of the \emph{cut points} of $B[0,\tau_\D]$. Indeed, our proof of Theorem~\ref{thm:backbone} is based on our recent paper with Fu and Sun~\cite{cfsx}, from which we can define a layer structure for the cut points of $B[0,\tau_\D]$ and then solve it explicitly; see Proposition~\ref{prop-cr-rho} in Section~\ref{section-backbone-1}. 
The remaining proof of Theorem~\ref{thm:backbone} is then in a similar fashion to~\cite[Lemma 2.6]{jego2023crossing}; see Section~\ref{section-backbone-sharp}.
We mention that~\cite{cfsx} heavily relies on the connection between planar Brownian motion and $\SLE_{8/3}$, and especially their coupling with Liouville quantum gravity (LQG). It would be interesting to find a derivation of Theorem~\ref{thm:backbone} without relying on LQG.

Theorem~\ref{thm:backbone} is related to a certain kind of special points of Brownian motion. Namely, let
\[
\mathcal{B}:=\{B_s:s\in[0,\tau_\D) \text{ and there exists } \varepsilon>0 \text{ such that } (B_{s+u})_{0\le u\le\varepsilon} \text{ does not have a cut point}\}.
\]
Then Theorem~\ref{thm:backbone} indicates that $\mathcal{B}$ is non-empty and has Hausdorff dimension 2. Furthermore, it also suggests that the Hausdorff measure of $\mathcal{B}$ with the gauge $r\mapsto r^2\log\frac{1}{r}\log\log\frac{1}{r}\log\log\log\frac{1}{r}$ would exist and be non-trivial (since the Hausdorff gauge of $B[0,\tau_\D]$ itself is $r\mapsto r^2\log\frac{1}{r}\log\log\log\frac{1}{r}$, see e.g.~\cite{LG85}). In particular, due to that planar Brownian motion a.s.~has no double cut points~\cite{BL}, we know that $\mathcal{B}$ contains the set of double points of $(B_t)_{0\le t\le\tau_\D}$, which has the Hausdorff gauge $r\mapsto r^2(\log\frac{1}{r}\log\log\log\frac{1}{r})^2$~\cite{LG87}.

\subsection{Outlook and discussions}
\label{section-discussion}

Here we give several remarks and related questions before going into the proof.

\begin{itemize}
\setlength{\itemsep}{0pt}
\setlength{\parskip}{0pt}
\setlength{\parsep}{0pt}
\item One can similarly consider the probability that there exist $2n$ disjoint subpaths on $B[0,\tau_\D]$ joining $\varepsilon\S^1$ and $\frac{1}{2}\S^1$ with $n\ge2$. Note that for $n\ge2$, such $2n$-arm probability is not straightforwardly related to the cut points of $B[0,\tau_\D]$. Instead, one need to consider the $(2n-1)$-tuples of the local cut points such that removing these $(2n-1)$ cut points from the trajectory $B[0,\tau_\D]$, $0$ and $B_{\tau_\D}$ are not in the same connected component of the remaining set. However, analyzing such tuples of local cut points (e.g.~the counterpart of the layer structure in Section~\ref{section-backbone-1}) becomes much more complicated, and it seems difficult to solve them explicitly. We mention that similar difficulty also appears in deriving the monochromatic $k$-arm exponents of critical planar percolation for $k\ge3$, see~\cite[Remark 2.3]{nolin2024backboneexponenttwodimensionalpercolation}.

\item 
There is also a natural half-plane variant of our setup. Namely, let $(e_t)_{t\ge0}$ be the Brownian excursion on the upper half plane $\hH$ from $0$ to $\infty$, and define $R\S^+:=R\S^1\cap\hH$ for $R>0$. Then for $n\ge1$, consider the asymptotic probability that 
there exists $(2n+1)$ disjoint subpaths on the trajectory of $(e_t)$ joining $\S^+$ and $R\S^+$ as $R\to\infty$.
Note that the case $n=1$ can be similarly related to the cut points of $(e_t)$, which has been proven in~\cite[Theorem 4]{brownian-beads}.

\item One can also consider the backbone probability for three-dimensional Brownian motion, i.e.~the probability that there are two disjoint subpaths joining $\varepsilon\S^2$ and $\frac{1}{2}\S^2$ on the Brownian tracjectory starting from $0$ until hitting $\S^2$. We conjecture that such probability equals $\varepsilon^{\alpha+o(1)}$ as $\varepsilon\to0$ for some $\alpha\in(0,\infty)$. (The backbone probability for $d\ge4$ dimensional Brownian motion becomes less interesting since the Brownian motion is now a simple curve.)

\item In our forthcoming work~\cite{cx}, we will extend the result of this paper to the Brownian loop soup cases. Let $(B_t)_{0\le t\le\tau_\D}$ be as before, and let $\mathcal{L}$ be an independent Brownian loop soup on $\D$ with intensity $\frac{c}{2}$ for $c\in(0,1]$. 
For a subset $A\subset\ol\D$, denote $\mathcal{C}(A)$ to be the union of $A$ and all loop-soup clusters in $\mathcal{L}$ intersecting with $A$, 
and let $H:=\mathcal{C}(B[0,\tau_\D])$. 
However, unlike the Brownian motion case, 
there is a positive probability that $0$ itself can have two disjoint subpaths (except on $0$) on $H$
connected to $\frac{1}{2}\S^1$. Moreover, in~\cite{cx} we will show the following
\begin{theorem}
We say $t\in(0,\tau_\D]$ is a cut time of $H$ if $\mathcal{C}(B[0,t])\cap \mathcal{C}(B[t,\tau_\D])=B_t$. Let $t_1$ be the largest cut time of $H$ such that the boundary of $\mathcal{C}(B[0,t_1])$ is a simple loop, and denote $E$ to be the event that $t_1$ is the smallest cut time of $H$. 
Then for $c\in(0,1)$,
\begin{equation}\label{eq:p_e}
\P[E]=2^{\frac{c+1}{2}}\sqrt{\frac{6}{1-c}}\left(\int_0^\infty\tau^{-1-\frac{c}{2}}e^{\frac{1-c}{6}\pi\tau}\eta(2i\tau)^{1-c}d\tau\right)^{-1}
\end{equation}
where $\eta(2i\tau):=e^{-\frac{\pi}{6}\tau}\prod_{n=1}^{\infty}\left(1-e^{-4\pi n\tau}\right)$ is the Dedekind eta function.
\end{theorem}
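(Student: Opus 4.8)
\medskip
\noindent\textbf{Strategy for~\eqref{eq:p_e} (to be carried out in~\cite{cx}).}
The plan is to reduce $\P[E]$ to an explicit computation on Liouville quantum gravity surfaces, building on the coupling of the Brownian loop soup with LQG and on the cut-point analysis of~\cite{cfsx}, together with the conformal welding of quantum disks. Working on the event that $\partial\cC(B[0,t_1])$ is a simple loop, one decomposes $H$ at the cut time $t_1$ into the \emph{inner} part $D_1:=\cC(B[0,t_1])$, a topological disk carrying the marked point $0$ and the loop-soup cluster through $0$, and the \emph{outer} part $H':=\cC(B[t_1,\tau_\D])$, attached to $D_1$ at the single point $B_{t_1}$. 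Using the restriction/Markov property of the loop soup at $B_{t_1}$ and the welding description of the quantum surface carried by $H$ (with marked points $0$ and $B_{\tau_\D}$), the pair $(D_1,H')$ is conditionally independent given the quantum length of the interface loop $\partial D_1$, and each piece is an explicit quantum surface; one then writes $\P[E]$ as an integral over that interface, which we parametrize by a conformal modulus $\tau$ of the outer part.

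The event $E$ is then read off on the two pieces. On the inner piece, the requirement that $t_1$ also be the \emph{smallest} cut time means $D_1$ has no cut time in $(0,t_1)$, i.e.\ $\cC(B[0,t_1])$ is the maximal simple-boundary disk around $0$; this pins $D_1$ to a specific quantum-disk law decorated by the cluster of $0$, which, being rigid up to the scaling set by the interface, contributes only the explicit constant $2^{(c+1)/2}\sqrt{6/(1-c)}$, the $\sqrt{1-c}$ and the powers of $2$ coming from the normalizations of the quantum-disk and quantum-length measures and from the modulus convention $i\tau\mapsto2i\tau$. On the outer piece, the requirement that no cut time exceeding $t_1$ have simple boundary means every further pinch of $H'$ is non-simple, i.e.\ the loop-soup clusters keep $\partial\cC(B[0,t])$ from closing into a simple loop all the way out to $B_{\tau_\D}$; conformally this is precisely the event that a Brownian loop soup of intensity $c/2$ --- equivalently a $\CLE_\kappa$ with $\kappa\in(8/3,4)$, of central charge $c$ --- on a cylinder of modulus $\tau$ has a cluster crossing it. By the SLE/CLE--LQG integrability machinery (the $\CLE_\kappa$ annulus partition function), this crossing partition function equals $e^{\frac{1-c}{6}\pi\tau}\eta(2i\tau)^{1-c}=\prod_{n\ge1}(1-e^{-4\pi n\tau})^{1-c}$, while the law of the interface modulus contributes the weight $\tau^{-1-c/2}\,d\tau$. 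Assembling the two factors --- $\P[E]$ being the ratio of the mass of this $E$-configuration to the total mass of the admissible cut-point configurations of $H$ near $0$ --- and carrying out the $\tau$-integral gives~\eqref{eq:p_e}. As a sanity check, sending $c\to0$ (no loops, $H=B[0,\tau_\D]$, and $E$ the event that $B[0,\tau_\D]$ has no cut point in $(0,\tau_\D)$), the integrand decays only like $\tau^{-1}$ as $\tau\to\infty$, the integral diverges, and $\P[E]=0$, consistent with the vanishing of the pure Brownian backbone probability in Theorem~\ref{thm:backbone}; for $c\in(0,1)$ the factor $1-c$ makes $\prod_{n\ge1}(1-e^{-4\pi n\tau})^{1-c}$ decay super-exponentially as $\tau\to0$, so the integral is finite and $\P[E]\in(0,1)$.

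The main difficulty is twofold. First, making the cut-point decomposition of $H$ rigorous in the loop-soup setting requires the loop-soup--LQG coupling on non-simply-connected domains and an extension of the cut-point and layer analysis of~\cite{cfsx}. Second, and at the heart of the matter, one must identify the outer factor with the Dedekind-eta expression, with the correct exponent \emph{and} constant: the exponent $1-c$ reflects a ``complementary'' central charge $1-c$ on the cut cylinder (so that $c=1$, $\kappa=4$, is flat, the product reducing to $1$), and matching it together with the prefactor $2^{(c+1)/2}\sqrt{6/(1-c)}$ requires careful bookkeeping of the conformal-anomaly factors. Conceptually this is the analogue of the layer structure of Section~\ref{section-backbone-1}, now a genuine renewal over the non-simple pinches whose per-step weight is a $\CLE_\kappa$ cylinder partition function, which must be solved \emph{exactly} rather than merely estimated: in this language, the $(\log|\log\varepsilon|)^{-1}$ decay of Theorem~\ref{thm:backbone} says the renewal has infinite total mass when $c=0$, while the positive value in~\eqref{eq:p_e} records its mass becoming finite for $c\in(0,1)$.
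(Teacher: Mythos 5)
First, a structural point: this paper does not actually prove the statement you are addressing. The theorem is quoted as a preview of the forthcoming work \cite{cx} (``in \cite{cx} we will show the following''), so there is no proof here to compare your proposal against, and your submission is, by its own admission, a strategy outline rather than a proof. The steps it leaves open are precisely the ones that carry all the content. Concretely: (i) the conditional independence of the inner piece $\cC(B[0,t_1])$ and the outer piece given the quantum length of the interface is asserted without argument in the loop-soup setting, where the decorated surfaces and the interface law are not covered by the welding results of \cite{ARS-Annulus} or the cut-point analysis of \cite{cfsx} used in this paper; (ii) the identification of the outer factor with $e^{\frac{1-c}{6}\pi\tau}\eta(2i\tau)^{1-c}$ (a loop-soup/CLE annulus crossing partition function) and of the modulus weight $\tau^{-1-\frac{c}{2}}\,d\tau$ is exactly the integrability input that would have to be established --- the general-$c$ analogue of Theorem~\ref{thm-cr-mod-relation} and of the density $\frac{d\nu_\D}{d\SLE^{\lp}_{8/3,\D}}(\eta)=\frac{C_1}{\Mod(\eta,\S^1)}$ invoked in Lemma~\ref{lem-CR-BM-counting-ell}; and (iii) the prefactor $2^{\frac{c+1}{2}}\sqrt{6/(1-c)}$ is attributed to ``normalizations'' without any computation. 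You have restated the theorem in LQG language and flagged the hard parts, which you yourself label the ``main difficulties''; that is a roadmap, not a verifiable argument.

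That said, the skeleton is consistent with the machinery of this paper, and your sanity checks are sound: at $c=0$ your integrand $\tau^{-1-\frac{c}{2}}\prod_{n\ge1}(1-e^{-4\pi n\tau})^{1-c}$ reduces to $\tau^{-1}e^{\frac{\pi}{6}\tau}\eta(2i\tau)$, which is the $\lambda=0$ (i.e.\ $a=-\frac{\pi}{6}$) endpoint of the integral evaluated in the proof of Lemma~\ref{lem-CR-BM-counting-ell}, where it indeed diverges --- matching both the paper's remark that the right side of~\eqref{eq:p_e} tends to $0$ as $c\downarrow0$ and the iterated-logarithmic decay of Theorem~\ref{thm:backbone}. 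One imprecision worth fixing: the condition that no cut time exceeding $t_1$ has simple boundary is not part of the event $E$ --- it is built into the definition of $t_1$ as the \emph{largest} simple-boundary cut time --- although it must of course be accounted for when decomposing the configuration at $t_1$, so your ``ratio of masses'' framing needs to be made precise about which constraints sit in the numerator versus the reference measure.
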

Note that the right side of~\eqref{eq:p_e} tends to $1$ as $c\uparrow1$, and tends to $0$ as $c\downarrow0$.
\end{itemize}

\medskip
\noindent\textbf{Organization of the paper.} In Section~\ref{section-SLE loop}, we first review the explicit relation between the laws of conformal radii and moduli for $\SLE_{8/3}$-type loops, which is an important ingredient of Theorem~\ref{thm:backbone}. Such relation was implicitly established in~\cite{ARS-Annulus}, and we will provide its proof in Appendix~\ref{appendix} for completeness. Then Sections~\ref{section-backbone-1} and~\ref{section-backbone-sharp} are devoted to prove Theorem~\ref{thm:backbone}. 

\medskip
\noindent\textbf{Basic notations.}
For two compact sets $A,B\subset\C$, let $\dist(A,B):=\inf\{|a-b|:a\in A,b\in B\}$.
For a simple loop $\ell\subset\C$, let $D(\ell)$ be the bounded connected component of $\C\backslash\ell$. For a simple loop $\ell\subset\C$ with $0\in D(\ell)$, we denote the conformal radius of $D(\ell)$ seen from 0 by $\CR(\ell,0)$, i.e. if $f:\D\to D(\ell)$ is a conformal map that fixes the origin, then $\CR(\ell,0)=|f'(0)|$.

For $0<r<1$, let $\A_r$ be the standard annulus $\{z\in\C: r<|z|<1\}$. For each annular domain $A\subset\C$,
there exists a unique $\tau>0$ such that $A$ and $\A_{e^{-2\pi\tau}}$ are conformally equivalent. We call $\tau$ the \textit{modulus} of $A$ and write $\Mod(A):=\tau$. For two simple loops $\eta_1,\eta_2$ such that $\overline{D(\eta_1)}\subset D(\eta_2)$, we also write the modulus of the annular domain $D(\eta_2)\setminus\overline{D(\eta_1)}$ as $\Mod(\eta_1,\eta_2)$ for simplicity.

We will frequently deal with elementary functions such as $\sqrt{x},\sinh(\sqrt{x}),\cosh(\sqrt{x})$, and $\tanh(\sqrt{x})$. In the following, we view them as functions defined on $\R$ by taking $i\sqrt{|x|},i\sin(\sqrt{|x|}),\cos(\sqrt{|x|})$, and $i\tan(\sqrt{|x|})$ for $x<0$, respectively.

\medskip
\noindent\textbf{Acknowledgment.}
We are grateful to Xin Sun for many helpful comments and suggestions on the early draft of this paper. We also thank Xinyi Li for helpful discussions. G.C. and Z.X.\ were partially supported by National Key R\&D Program of China (No.\ 2023YFA1010700). G.C. was partially supported by National Key R\&D Program of China (No. 2021YFA1002700).

\section{Conformal radii of $\SLE_{8/3}$-type loops}
\label{section-SLE loop}
The $\SLE_{8/3}$ loop measure on $\C$ is a canonical infinite measure on simple loops characterized by conformal restriction~\cite{werner-loops}, which can be obtained by taking outer boundaries from the Brownian loop measure on $\C$. According to~\cite{gwynne-miller-gluing,loopwelding}, the $\SLE_{8/3}$ loop measure on $\C$ also describes the welding interface when conformally welding two independent Brownian disks into a Brownian sphere.

Let $\SLE^{\lp}_{8/3,\D}$ be the $\SLE_{8/3}$ loop measure on $\C$ restricted to the loops that are contained in $\D$ and surround 0. 
By~\cite{ARS-Annulus}, $\SLE_{8/3,\D}^{\lp}$ is the welding interface when comformally welding a Brownian disk and an independent Brownian annulus into a larger Brownian disk. This, combined with integrability of Liouville conformal field theory on the annulus by~\cite{wu-annulus}, gives the following

\begin{theorem}[\cite{ARS-Annulus}]
\label{thm-cr-mod-relation}
Let $\mu_{\D}$ be a measure on simple loops contained in $\D$ and surrounding 0, such that $\frac{d\mu_{\D}}{d\SLE^{\lp}_{8/3,\D}}(\eta)=f(\Mod(\eta,\S^1))$ for some positive continuous function $f:\R_+\to\R_+$.
Then there is a constant $C>0$ such that for each $\lambda\ge0$,
$$
\int\CR(\eta,0)^\lambda\mu_{\D}(d\eta)=C\frac{\sqrt{12\lambda-1}}{\sinh\left(\frac{\pi}{3}\sqrt{12\lambda-1}\right)}\int_0^\infty e^{-(2\lambda-\frac{1}{6})\pi\tau}\eta(2i\tau)f(\tau)d \tau.
$$
Here $\eta(2i\tau)=e^{-\frac{\pi}{6}\tau}\prod_{n=1}^\infty(1-e^{-4\pi n\tau})$ is the Dedekind eta function.
When $\lambda\in(0,\frac{1}{12})$, the right side is defined by analytic continuation; see the end of Section~\ref{section-intro}.
\end{theorem}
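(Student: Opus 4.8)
The plan is to establish the stated identity by unfolding the LQG description of $\SLE^{\lp}_{8/3,\D}$ from~\cite{ARS-Annulus} and then integrating against the annulus Liouville CFT partition function from~\cite{wu-annulus}. First I would recall that $\SLE^{\lp}_{8/3,\D}$ arises as the welding interface when one conformally welds a quantum disk to an independent quantum annulus to produce a larger quantum disk; in this picture the loop $\eta$ is the welding seam, the modulus $\Mod(\eta,\S^1)$ is (up to the deterministic conformal map to the reference annulus) the modulus of the quantum annulus piece, and $\CR(\eta,0)$ is the conformal radius of the inner quantum-disk piece seen from its marked interior point. The key point is that, conditionally on the modulus $\tau$, the law of the inner disk is that of a quantum disk of the appropriate weight, whose conformal radius has an explicitly known moment generating function (this is where the factor $\frac{\sqrt{12\lambda-1}}{\sinh(\frac{\pi}{3}\sqrt{12\lambda-1})}$ comes from — it is the $\lambda$-th moment of the conformal radius of a quantum disk, equivalently of an $\SLE_{8/3}$ bubble, which is classical and can be read off from the one-point function / reflection coefficient computations), while the marginal law of $\tau$ under $\SLE^{\lp}_{8/3,\D}$ is proportional to $e^{(\frac16 - \text{const})\pi\tau}\eta(2i\tau)\,d\tau$, the annulus partition function of the relevant Liouville theory, by~\cite{wu-annulus}.

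Concretely, the steps are: (1) write $\mu_\D(d\eta) = f(\Mod(\eta,\S^1))\,\SLE^{\lp}_{8/3,\D}(d\eta)$ and disintegrate $\SLE^{\lp}_{8/3,\D}$ over the modulus variable $\tau = \Mod(\eta,\S^1)$, obtaining a ``modulus density'' $m(\tau)\,d\tau$ times a conditional loop law $\SLE^{\lp}_{8/3,\D}(\cdot\mid \tau)$; (2) identify, via the welding-of-quantum-surfaces statement of~\cite{ARS-Annulus}, that under $\SLE^{\lp}_{8/3,\D}(\cdot\mid\tau)$ the conformal radius $\CR(\eta,0)$ is distributed as the conformal radius of a weight-$W$ quantum disk (with $W$ the weight matching the $8/3$/LQG dictionary, so that the central/background charge bookkeeping reproduces the exponent $2\lambda - \frac16$ and the $-\frac1{12}$ shift); (3) compute $\int \CR(\eta,0)^\lambda\,\SLE^{\lp}_{8/3,\D}(d\eta\mid\tau)$ as a $\tau$-independent constant times the known quantum-disk conformal-radius moment, yielding the $\frac{\sqrt{12\lambda-1}}{\sinh(\frac\pi3\sqrt{12\lambda-1})}$ factor; (4) identify $m(\tau)$ with the annulus Liouville partition function $e^{-(2\lambda-\frac16)\pi\tau}\eta(2i\tau)$ — more precisely, after step (3) the combined $\tau$-integrand is $m(\tau) f(\tau)$ times the conformal-radius moment, and matching against~\cite{wu-annulus} shows $m(\tau) \propto e^{-(2\lambda - \frac16)\pi\tau}\eta(2i\tau)$ with a $\lambda$-dependence that is exactly accounted for by the quantum-disk-with-boundary-insertion normalization; (5) assemble, absorb all $\lambda$- and $\tau$-independent proportionality constants into a single $C>0$, and finally handle the range $\lambda\in(0,\frac1{12})$ by noting both sides are analytic in $\lambda$ there (the left side because $\CR(\eta,0)\le 1$ makes the integral manifestly finite and analytic, the right side by the analytic-continuation convention fixed in Section~\ref{section-intro} for $\sqrt{12\lambda-1}$, $\sinh(\sqrt\cdot)$).

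The main obstacle I expect is step (4): pinning down the precise $\tau$-dependence — and, crucially, its interaction with $\lambda$ — in the disintegration of $\SLE^{\lp}_{8/3,\D}$ over the modulus. The subtlety is that ``conditioning on the modulus'' is conditioning on a $\sigma$-algebra generated by a non-trivial functional of the loop, and the joint law of $(\tau, \CR(\eta,0))$ need not factor in the naive way; one must use the exact form of the quantum annulus / quantum disk welding (including the marked-point insertions and the weight of the annulus) to see that the $\lambda$-th moment of $\CR$ conditionally on $\tau$ is genuinely constant in $\tau$, and that the remaining $\tau$-integral is precisely the Liouville annulus partition function with the insertion dictated by the $\CR^\lambda$ reweighting (which is what produces the exponential weight $e^{-(2\lambda-\frac16)\pi\tau}$ rather than a bare $e^{\frac16\pi\tau}$). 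This is exactly the content that~\cite{wu-annulus} supplies, so the work is in quoting it with the correct parameters; the rest is bookkeeping of conformal anomaly / Weyl-anomaly factors and constants. A secondary, purely technical point is justifying the interchange of $\int\mu_\D(d\eta)$ and the various disintegration integrals, which follows from positivity (Tonelli) since $\CR(\eta,0)^\lambda \ge 0$ and $f>0$, together with finiteness of the right-hand side for $\lambda \ge \frac1{12}$.
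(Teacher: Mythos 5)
There is a genuine gap, and it sits exactly where you suspected trouble, but your proposed resolution is internally inconsistent. Your steps (2)--(3) assert that, after disintegrating $\SLE^{\lp}_{8/3,\D}$ over the modulus $\tau=\Mod(\eta,\S^1)$, the conditional moment $\int\CR(\eta,0)^\lambda\,\SLE^{\lp}_{8/3,\D}(d\eta\mid\tau)$ is a $\tau$-independent constant equal (up to normalization) to $\frac{\sqrt{12\lambda-1}}{\sinh(\frac{\pi}{3}\sqrt{12\lambda-1})}$. This is false: deterministically $\CR(\eta,0)\asymp e^{-2\pi\Mod(\eta,\S^1)}$ (Gr\"otzsch-type bounds), so the conditional $\lambda$-th moment must carry a factor $e^{-2\pi\lambda\tau}$; indeed, reading the theorem backwards with $f$ an arbitrary test function shows $\E[\CR^\lambda\mid\tau]=c(\lambda)\,e^{-2\pi\lambda\tau}$ while the modulus density of $\SLE^{\lp}_{8/3,\D}$ is proportional to $e^{\frac{\pi\tau}{6}}\eta(2i\tau)$, with no $\lambda$ in it. Your step (4) tries to absorb the missing $e^{-2\pi\lambda\tau}$ into the ``modulus density'' $m(\tau)\propto e^{-(2\lambda-\frac16)\pi\tau}\eta(2i\tau)$, but a disintegration density of a fixed measure cannot depend on the exponent $\lambda$ of the test functional; as written, steps (3) and (4) contradict each other. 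Also, the factor $\frac{\sqrt{12\lambda-1}}{\sinh(\frac{\pi}{3}\sqrt{12\lambda-1})}$ is not a ``classical quantum-disk conformal-radius moment'' (the classical one, for the loop $\ell_1$, has $\frac{\pi}{2}$ in the $\sinh$); the $\frac{\pi}{3}$ here is specific to the annulus computation.

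The missing idea is the insertion-reweighting mechanism. The paper never conditions on the modulus of the bare loop measure. Instead it couples $\sm$ (the half-plane version of $\SLE^{\lp}_{8/3,\D}$) with an independent Liouville field $\LF_{\hH}^{(\gamma,i)}$, uses the welding theorem of~\cite{ARS-Annulus} to identify the two pieces cut out by the loop as an independent quantum disk and quantum annulus glued along quantum boundary length (Proposition~\ref{prop-sle-weld}), and then changes the interior insertion from $\gamma$ to $\alpha$. Under the coordinate change $\phi\mapsto(g\circ\psi_\eta^{-1})\bullet\phi$ this reweighting produces precisely the factor $\psi_\eta'(i)^{2\Delta_\alpha-2}=\CR(\eta,0)^{\lambda}$ on the loop side with $\lambda=2\Delta_\alpha-2$ (Proposition~\ref{prop-sle-reweight}); that is how $\CR^\lambda$ enters, as a Jacobian of an LQG insertion change, not as a conditional moment. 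Taking total masses, the power law $|\LF_{\hH}^{(\alpha,i)}(\ell)|\propto\ell^{\frac{2}{\gamma}(\alpha-Q)-1}$ and the quantum-annulus partition function of~\cite{wu-annulus} (Proposition~\ref{prop-LF-integrability}, with $y=\frac{2}{\gamma}(\alpha-Q)$ continued to imaginary values) then produce \emph{both} the $\frac{\sqrt{12\lambda-1}}{\sinh(\frac{\pi}{3}\sqrt{12\lambda-1})}$ prefactor (from $\frac{y}{\sin(\frac{\gamma^2}{4}\pi y)}$) and the exponential $e^{-(2\lambda-\frac16)\pi\tau}=e^{(\alpha-Q)^2\pi\tau}$ (from $e^{\frac{\pi}{4}\gamma^2\tau y^2}$) in one stroke. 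Without this reweighting step your outline has no mechanism to generate the correct joint $(\lambda,\tau)$-dependence, so the gap is not a bookkeeping issue but the core of the argument. A secondary caveat: for $\lambda\in(0,\frac1{12})$ the left side need not be finite (e.g.\ for $f(\tau)=1/\tau$ the measure $\mu_\D$ has infinite mass), so ``$\CR\le1$ makes it manifestly finite'' is not a valid justification of the analytic continuation there.
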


The proof of Theorem~\ref{thm-cr-mod-relation} is implicit in~\cite{ARS-Annulus}, and we will provide a proof in Appendix~\ref{appendix} for completeness. In the following, we turn to the proof of Theorem~\ref{thm:backbone}.

\section{Exact laws of the conformal radii in $B[0,\tau_\D]$}
\label{section-backbone-1}
For the Brownian trajectory $B[0,\tau_{\D}]$, we say $t\in (0,\tau_{\D}) $
is a \emph{cut time} if $B[0,t)\cap B(t,\tau_{\D}]=\emptyset$, and the corresponding 
$B_t$ is called a cut point.
Note that for any $\varepsilon>0$, $B[0,\tau_\D]$ has infinitely many cut points contained in $\varepsilon\D$~\cite[Theorem 2.2]{burdzy-cut-points}.

\begin{definition}\label{def:cut-point}
Let $(t_i)_{i\ge1}$ be the collection of cut time $t$'s for $B[0,\tau_\D]$ such that the outer boundary of $B[0,t]$ is a simple loop, ranked in decreasing order.
Note that a.s.~$\lim_{i\to\infty}t_i=0$.
For each $i\ge1$, define $s_i$ to be the last cut time before $t_i$ (thus $t_i>s_i>t_{i+1}$). Let $\ell_i$ and $\wt\ell_i$ be the outer boundary of $B[0,t_i]$ and the boundary of the connected component of $\D\setminus B[s_i,t_i]$ containing the origin, respectively.
\end{definition}

\begin{figure}[htbp]
    \centering
    \includegraphics[width=0.4\linewidth]{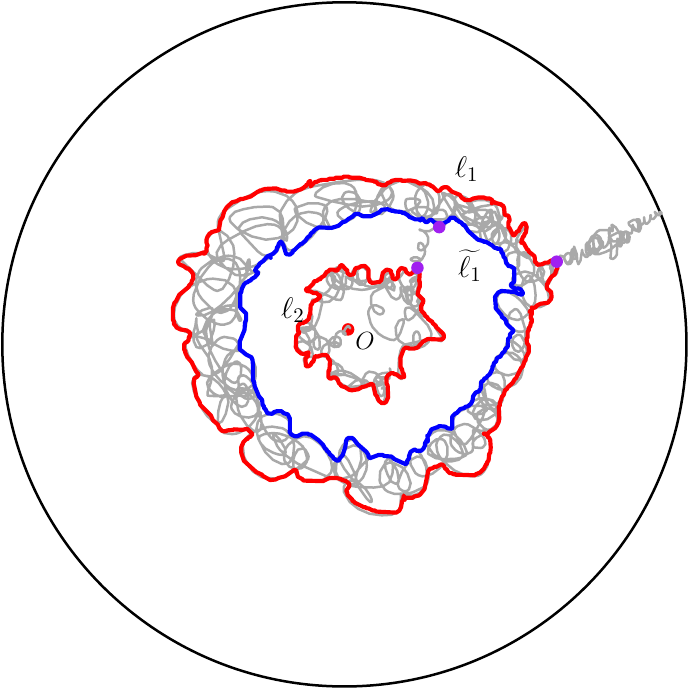}
    \caption{Illustration of Definition~\ref{def:cut-point}.}
\end{figure}

Let $f$ be the conformal map from $D(\ell_1)$ to $\D$ with $f(0)=0$, $f'(0)>0$ and let $\rho=f(\wt\ell_1)$. Note that $\CR(\rho,0)=\frac{\CR(\wt\ell_1,0)}{\CR(\ell_1,0)}$. 
The main result of this section is the following exact law of $\CR(\rho,0)$, which is crucial to the final proof of Theorem~\ref{thm:backbone} in Section~\ref{section-backbone-sharp}.
\begin{proposition}
\label{prop-cr-rho}
For $\lambda\ge0$, we have
\begin{equation}\label{eq:cr-rho}
\E[\CR(\rho,0)^\lambda]=\frac{\sinh(\frac{\pi}{2}\sqrt{12\lambda-1})}{\sqrt{12\lambda-1}}-\frac{2\sqrt{3}\sinh(\frac{\pi}{3}\sqrt{12\lambda-1})}{\sqrt{12\lambda-1}\log\left(\frac{(2+\sqrt{3})^{2}+\tanh^{2}\left(\frac{\pi}{12}\sqrt{12\lambda-1}\right)}{1+(2+\sqrt{3})^{2}\tanh^{2}\left(\frac{\pi}{12}\sqrt{12\lambda-1}\right)}\right)}.
\end{equation}
When $\lambda\in[0,\frac{1}{12})$, the right side of~\eqref{eq:cr-rho} is defined by analytic continuation; see the end of Section~\ref{section-intro}.
\end{proposition}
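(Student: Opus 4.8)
The plan is to determine the conditional law of $\rho=f(\wt\ell_1)$ given $\ell_1$ using~\cite{cfsx}, feed it into the conformal-radius/modulus identity of Theorem~\ref{thm-cr-mod-relation}, and then evaluate the resulting integral in closed form. For the first step, note that cut points and filled hulls of planar Brownian motion are conformally covariant, so the conformal image under $f\colon D(\ell_1)\to\D$ of the portion $B[0,t_1]$ of the trajectory has a law not depending on $\ell_1$; hence the conditional law of $\rho$ given $\ell_1$ is a fixed probability measure $\mu_\rho$ on simple loops surrounding $0$ with $\ol{D(\eta)}\subset\D$. The input I would take from~\cite{cfsx}, via its coupling of $B[0,\tau_\D]$ with $\SLE_{8/3}$ and Liouville quantum gravity, is that $\mu_\rho$ is absolutely continuous with respect to $\SLE^{\lp}_{8/3,\D}$ with
$$\frac{d\mu_\rho}{d\SLE^{\lp}_{8/3,\D}}(\eta)=f_0\big(\Mod(\eta,\S^1)\big)$$
for an explicit positive continuous $f_0\colon\R_+\to\R_+$. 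Heuristically $f_0$ is governed by the ``final excursion'' $B[s_1,t_1]$ — a Brownian segment joining $\wt\ell_1$ to $\ell_1$, filling the intervening annulus, and carrying no cut point of $B[0,\tau_\D]$, which is exactly the statement that $s_1$ is the last cut time before $t_1$ — together with a factor for the Brownian core $B[0,s_1]$ inside $D(\wt\ell_1)$; the fact that this segment is the \emph{last} one is what makes the conditional density a function of $\Mod(\eta,\S^1)$ alone. Identifying $f_0$ explicitly (presumably as an explicit Brownian/$\SLE_{8/3}$ annulus partition function) is the heart of the argument and is where~\cite{cfsx} is essential.

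Granting Step 1, take $\mu_\D=\mu_\rho$, $f=f_0$ in Theorem~\ref{thm-cr-mod-relation} to get, for all $\lambda\ge0$,
$$\E[\CR(\rho,0)^\lambda]=C\,\frac{\sqrt{12\lambda-1}}{\sinh\!\big(\tfrac\pi3\sqrt{12\lambda-1}\big)}\int_0^\infty e^{-(2\lambda-\frac16)\pi\tau}\,\eta(2i\tau)\,f_0(\tau)\,d\tau,$$
with $C>0$ a fixed constant. Since $\ol{D(\rho)}$ is a compact subset of $\D$ we have $D(\rho)\subsetneq\D$, hence $\CR(\rho,0)\in(0,1)$ a.s.\ and $\E[\CR(\rho,0)^0]=1$; plugging $\lambda=0$ therefore pins down the value of $C\int_0^\infty \eta(2i\tau)e^{\pi\tau/6}f_0(\tau)\,d\tau$, fixing the overall normalization.

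To evaluate the integral, write $\eta(2i\tau)=e^{-\pi\tau/6}\prod_{n\ge1}(1-e^{-4\pi n\tau})$, so that by the Euler pentagonal number theorem
$$e^{-(2\lambda-\frac16)\pi\tau}\,\eta(2i\tau)=e^{-2\lambda\pi\tau}\prod_{n\ge1}(1-e^{-4\pi n\tau})=\sum_{k\in\mathbbm{Z}}(-1)^k\,e^{-\frac\pi6\left((12\lambda-1)+(6k-1)^2\right)\tau}.$$
Insert the explicit $f_0$, integrate termwise, and resum: the series collapses, via Mittag-Leffler-type identities for $\sum_k (-1)^k/(a^2+(k+b)^2)$, into hyperbolic functions of $\sqrt{12\lambda-1}$ together with a single logarithm — the logarithm in~\eqref{eq:cr-rho}, whose appearance reflects the precise form of $f_0$. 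Two built-in checks guide the computation: the value at $\lambda=0$ must equal $1$, and the two terms on the right of~\eqref{eq:cr-rho} must share the same leading exponential growth as $\lambda\to\infty$ (they cancel, as they must since $\CR(\rho,0)<1$ a.s.). Finally, both sides of~\eqref{eq:cr-rho} being analytic in $\lambda$ in a neighborhood of $[0,\infty)$ once $\sqrt{\cdot}$, $\sinh$, $\tanh$ are interpreted as in Section~\ref{section-intro}, the identity extends from large $\lambda$ to all $\lambda\in[0,\infty)$ by analytic continuation.

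The decisive difficulty is Step 1: extracting from~\cite{cfsx} the exact conditional law of $\rho$, in particular the explicit annular partition function $f_0$ controlling the final no-cut-point excursion (and verifying that the conditional density depends only on the modulus). Compared with that, the rest is largely bookkeeping — although the emergence of a genuine logarithm, rather than a ratio of elementary hyperbolic functions, is delicate and hinges on the shape of $f_0$ and on choosing the right resummation identities.
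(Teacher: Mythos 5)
Your overall architecture (get a modulus--dependent description of a Brownian--derived loop law from \cite{cfsx}, push it through Theorem~\ref{thm-cr-mod-relation}, evaluate the Laplace transform of $\eta(2i\tau)$, and fix the normalization by a limiting check) is the right flavor, and your preliminary observation that the conditional law of $\rho$ given $\ell_1$ is a fixed measure is correct (it is Lemma~\ref{lem-independent-BM-CR}(i), proved by a restriction argument). But the step you yourself flag as decisive --- that the law of $\rho$ satisfies $\frac{d\mu_\rho}{d\SLE^{\lp}_{8/3,\D}}(\eta)=f_0(\Mod(\eta,\S^1))$ for an explicit $f_0$ --- is a genuine gap: this is not what \cite{cfsx} provides, and no such description of the law of $\rho$ (equivalently of $\wt\ell_1$) is established or used anywhere. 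The structure of the answer~\eqref{eq:cr-rho}, a difference of two exponentially growing terms with a reciprocal logarithm, is not visibly of the form $C\frac{\sqrt{12\lambda-1}}{\sinh(\frac{\pi}{3}\sqrt{12\lambda-1})}\int_0^\infty e^{-(2\lambda-\frac16)\pi\tau}\eta(2i\tau)f_0(\tau)d\tau$, so even granting existence of $f_0$ you would face a nontrivial inverse problem rather than a resummation exercise.

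What the paper actually extracts from \cite{cfsx} is a statement about a different object: the \emph{counting measure} $\nu_\D$ over the whole family $\{\ell_i\}_{i\ge1}$ of outer boundaries at the special cut times, for which $\frac{d\nu_\D}{d\SLE^{\lp}_{8/3,\D}}(\eta)=\frac{C_1}{\Mod(\eta,\S^1)}$ (Lemma~\ref{lem-CR-BM-counting-ell}, via Theorem~\ref{thm-cr-mod-relation}). The law of $\rho$ is then obtained \emph{indirectly}: the i.i.d.\ renewal structure of Lemma~\ref{lem-independent-BM-CR} gives
\begin{equation*}
\int\CR(\eta,0)^\lambda\nu_\D(d\eta)=\sum_{n\ge1}\E[\CR(\ell_1,0)^\lambda]^n\,\E[\CR(\rho,0)^\lambda]^{n-1}
=\frac{\E[\CR(\ell_1,0)^\lambda]}{1-\E[\CR(\ell_1,0)^\lambda]\E[\CR(\rho,0)^\lambda]},
\end{equation*}
which, together with the known law of $\CR(\ell_1,0)$ (Lemma~\ref{lem-CR-BM-ell1}), is solved algebraically for $\E[\CR(\rho,0)^\lambda]$; the remaining unknown constant is pinned down not at $\lambda=0$ but by requiring the leading $e^{\frac{\pi}{2}\sqrt{12\lambda}}$ terms to cancel as $\lambda\to\infty$ (since $\CR(\rho,0)<1$ a.s.). To repair your argument you would need either to actually prove your claimed form of $\mu_\rho$ (which is not in the literature you cite) or to introduce the detour through $\nu_\D$ and the geometric-series identity, which is the essential missing idea.
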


The proof of Proposition~\ref{prop-cr-rho} relies on the exact solvability of $\CR(\ell_i,0)$ for each $i\ge1$ in Definition~\ref{def:cut-point}. The following explicit law of $\CR(\ell_1,0)$ is originally derived by~\cite[Theorem 1.5]{qian2021generalized} using the radial hypergeometric SLE; see~\cite[Remark 4.9]{cfsx} for another derivation using LQG.
\begin{lemma}
    \label{lem-CR-BM-ell1}
    For $\lambda\ge0$, we have
    \begin{equation}
    \label{eq-CR-BM-ell1}
        \E[\CR(\ell_1,0)^\lambda]=\frac{\sqrt{12\lambda-1}}{\sinh(\frac{\pi}{2}\sqrt{12\lambda-1})}.
    \end{equation}
    When $\lambda\in[0,\frac{1}{12})$, the right side of~\eqref{eq-CR-BM-ell1} is defined by analytic continuation. In particular, $\P[\CR(\ell_1,0)<\varepsilon]=\frac{16}{3\pi}\varepsilon^{\frac{1}{4}}(1+o(1))$ as $\varepsilon\downarrow0$.
\end{lemma}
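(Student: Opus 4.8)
The moment identity \eqref{eq-CR-BM-ell1} is quoted verbatim from \cite{qian2021generalized,cfsx}, so the only new content to be established is the tail estimate $\P[\CR(\ell_1,0)<\varepsilon]=\frac{16}{3\pi}\varepsilon^{1/4}(1+o(1))$, which I would extract from \eqref{eq-CR-BM-ell1} by a Tauberian argument. Set $X:=-\log\CR(\ell_1,0)$; since $0\in D(\ell_1)$ and $D(\ell_1)\subsetneq\D$ a.s.\ (as $B[0,t_1]$ is a compact subset of $\D$), we have $\CR(\ell_1,0)\in(0,1)$ a.s., so $X>0$ a.s.\ and \eqref{eq-CR-BM-ell1} says $\E[e^{-\lambda X}]=g(\lambda):=\frac{\sqrt{12\lambda-1}}{\sinh(\frac{\pi}{2}\sqrt{12\lambda-1})}$ for $\lambda\ge0$. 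Writing $g(\lambda)=\bigl(\frac{\sinh(\frac{\pi}{2}\sqrt z)}{\sqrt z}\bigr)^{-1}$ with $z=12\lambda-1$ and using that $z\mapsto\frac{\sinh(\frac{\pi}{2}\sqrt z)}{\sqrt z}$ is entire with simple zeros exactly at $z=-4k^2$, $k\ge1$, the function $g$ extends meromorphically to $\C$ with poles precisely at $\lambda=\frac{1-4k^2}{12}$; in particular its only singularity in $\{\Re\lambda>-5/4\}$ is a simple pole at $\lambda=-1/4$. Expanding $\sqrt{12\lambda-1}=2\sqrt{1-3(\lambda+1/4)}$ near $\lambda=-1/4$ and $\sinh$ near its zero, a short computation gives that the residue of $g$ at $\lambda=-1/4$ equals $\frac{4}{3\pi}$.

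Next I would argue that $g$ is genuinely the Laplace transform of $X$ on a left-neighbourhood of $0$, not merely its analytic continuation. The abscissa of convergence $\sigma_c\le0$ of $\lambda\mapsto\E[e^{-\lambda X}]$ is, when finite, a singularity of the transform (Landau--Pringsheim), and on $(\sigma_c,\infty)$ the transform coincides with $g$; since $g$ is analytic on $(-1/4,\infty)$ and singular at $-1/4$, this forces $\sigma_c=-1/4$ and $\E[e^{-\lambda X}]=g(\lambda)$ for all $\lambda\in(-1/4,\infty)$. Because $g(0)=1$, the function $\psi(\lambda):=\frac{1-g(\lambda)}{\lambda}=\int_0^\infty e^{-\lambda t}\,\P[X>t]\,dt$ is then analytic on $\{\Re\lambda>-5/4\}$ except for a simple pole at $\lambda=-1/4$ of residue $\frac{16}{3\pi}$.

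Finally I would invoke an exponential Tauberian theorem (of Ikehara--Delange type), whose probabilistic side-condition is supplied by the monotonicity of $t\mapsto\P[X>t]$, to conclude $\P[X>t]=\frac{16}{3\pi}e^{-t/4}(1+o(1))$; equivalently one can run a Laplace-inversion / contour-shift argument, writing $\P[X>t]=\frac{1}{2\pi i}\int_{c-i\infty}^{c+i\infty}e^{\lambda t}\psi(\lambda)\,d\lambda$ for $c>0$ and moving the contour to $\Re\lambda=-1/4-\delta$, which captures the residue $\frac{16}{3\pi}e^{-t/4}$ and leaves a remainder of order $e^{-(1/4+\delta)t}$. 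Setting $t=\log(1/\varepsilon)$ then yields the claim. The step I expect to cost the most care is justifying this inversion/shift rigorously: one needs the rapid decay of $g$ along vertical lines (which holds because $\Re\sqrt{12\lambda-1}\to\infty$ there, whence $|g(\lambda)|\lesssim|\Im\lambda|^{1/2}e^{-c\sqrt{|\Im\lambda|}}$), and one must handle the only conditionally convergent contribution of the $\tfrac1\lambda$ factor in $\psi$ in the usual way — e.g.\ by first treating $\int_0^t\P[X>s]\,ds$ (which has absolutely convergent inversion integral) and then differentiating using monotonicity, or by integrating against a smooth cutoff. Everything else is routine.
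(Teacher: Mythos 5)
Your proposal is correct, and on the only part of the lemma that the paper actually proves --- namely nothing: the moment formula is quoted from \cite[Theorem 1.5]{qian2021generalized} (alternatively \cite[Remark 4.9]{cfsx}), and the tail asymptotic is asserted after ``in particular'' with no argument --- you do the same thing for the moments and then supply the missing derivation of the tail. Your derivation checks out: with $z=12\lambda-1$ the zeros of $z\mapsto\sinh(\tfrac{\pi}{2}\sqrt z)/\sqrt z$ are simple and sit at $z=-4k^2$, so the first singularity of $\E[e^{-\lambda X}]$ is a simple pole at $\lambda=-\tfrac14$, and the residue is indeed $\tfrac{4}{3\pi}$ (from $z+4=(w-2i)(w+2i)$ and $\sinh(\tfrac{\pi}{2}w)\approx-\tfrac{\pi}{2}(w-2i)$ at $w=2i$), giving residue $\tfrac{16}{3\pi}$ for $\psi(\lambda)=\frac{1-g(\lambda)}{\lambda}$ and hence $\P[X>t]\sim\frac{16}{3\pi}e^{-t/4}$. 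The Landau--Pringsheim step pinning the abscissa of convergence at $-\tfrac14$, and the Ikehara/contour-shift alternatives with the decay estimate $|g(\sigma+iT)|\lesssim|T|^{1/2}e^{-c\sqrt{|T|}}$, are all standard and correctly flagged. One small remark: the Tauberian lemma the paper does state (Lemma~\ref{thm:tauberian}) is tailored to the $\frac{C}{|\log\lambda|}$ regime used for $\CR(\rho,0)$ and does not cover the exponential-tail regime here, so you were right to bring in a separate Ikehara-type argument rather than trying to reuse that lemma.
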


Let $\nu_\D$ be the law of the loop chosen from the counting measure over $\{\ell_i\}_{i\ge1}$; namely, $\nu_\D$ is such that $\int F(\eta)\nu_\D(d\eta)=\P\left[\sum_{i\ge1}F(\ell_i)\right]$ for any bounded measurable function $F$. The following result, based on~\cite{cfsx} and Theorem~\ref{thm-cr-mod-relation}, gives the explicit law of the conformal radius under $\nu_\D$.
\begin{lemma}\label{lem-CR-BM-counting-ell}
There is a constant $C>0$ such that for $\lambda\ge0$, we have
\begin{equation}
    \label{eq-CR-BM-counting-ell}
        \int\CR(\eta,0)^\lambda\nu_{\D}(d\eta)=C\frac{\sqrt{12\lambda-1}}{\sinh(\frac{\pi}{3}\pi\sqrt{12\lambda-1})}\log\left(\frac{(2+\sqrt{3})^{2}+\tanh^{2}\left(\frac{\pi}{12}\sqrt{12\lambda-1}\right)}{1+(2+\sqrt{3})^{2}\tanh^{2}\left(\frac{\pi}{12}\sqrt{12\lambda-1}\right)}\right).
    \end{equation}
When $\lambda\in(0,\frac{1}{12})$, the right side of~\eqref{eq-CR-BM-counting-ell} is defined via analytic continuation as before.
\end{lemma}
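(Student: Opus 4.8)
The plan is to recognize $\nu_\D$ as a modulus-reweighting of $\SLE^{\lp}_{8/3,\D}$ and then quote Theorem~\ref{thm-cr-mod-relation}. The starting point is the analysis of the layer structure $(\ell_i,\wt\ell_i)_{i\ge1}$ of Definition~\ref{def:cut-point} carried out in~\cite{cfsx}: there one shows that this structure has a renewal/Markov property — conditionally on $B[0,t_i]$, the layers at deeper levels, mapped conformally from $D(\ell_i)$ back to $\D$, are distributed as an independent copy of the whole layer structure — and that the one-step transition laws appearing in this recursion are each absolutely continuous with respect to $\SLE^{\lp}_{8/3,\D}$ with a Radon--Nikodym derivative depending only on the modulus relative to $\S^1$. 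Writing $\int F\,d\nu_\D=\sum_{i\ge1}\E[F(\ell_i)]$ and summing the contributions of successive generations — a geometric/renewal series in the modulus variable — then gives $\nu_\D\ll\SLE^{\lp}_{8/3,\D}$ with $\frac{d\nu_\D}{d\SLE^{\lp}_{8/3,\D}}(\eta)=f(\Mod(\eta,\S^1))$ for an explicit positive continuous $f\colon\R_+\to\R_+$.

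Granting this, Theorem~\ref{thm-cr-mod-relation} applied to $\mu_\D=\nu_\D$ gives at once
\[
\int\CR(\eta,0)^\lambda\,\nu_\D(d\eta)=C\,\frac{\sqrt{12\lambda-1}}{\sinh\!\big(\tfrac{\pi}{3}\sqrt{12\lambda-1}\big)}\,I(\lambda),\qquad I(\lambda):=\int_0^\infty e^{-(2\lambda-\frac16)\pi\tau}\,\eta(2i\tau)\,f(\tau)\,d\tau,
\]
so the lemma reduces to evaluating $I(\lambda)$ in closed form and matching it, up to a multiplicative constant, with the logarithm in~\eqref{eq-CR-BM-counting-ell}.

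For this last step I would first simplify the target. With $s=\sqrt{12\lambda-1}$, using $\tanh^2(\tfrac{\pi}{12}s)=\tfrac{\cosh(\frac{\pi}{6}s)-1}{\cosh(\frac{\pi}{6}s)+1}$ and the arithmetic identity $\tfrac{(2+\sqrt3)^2-1}{(2+\sqrt3)^2+1}=\tfrac{\sqrt3}{2}=\cos\tfrac{\pi}{6}$, the argument of the logarithm in~\eqref{eq-CR-BM-counting-ell} becomes $\tfrac{\cosh(\frac{\pi}{6}s)+\cos\frac{\pi}{6}}{\cosh(\frac{\pi}{6}s)-\cos\frac{\pi}{6}}$; applying $\cosh X+\cos Y=2\cosh\tfrac{X+iY}{2}\cosh\tfrac{X-iY}{2}$ and $\cosh X-\cos Y=2\sinh\tfrac{X+iY}{2}\sinh\tfrac{X-iY}{2}$, this equals $\coth\!\big(\tfrac{\pi}{12}(s+i)\big)\coth\!\big(\tfrac{\pi}{12}(s-i)\big)$, so the logarithm equals $4\sum_{j\ge0}\tfrac{\cos\frac{(2j+1)\pi}{6}}{2j+1}\,e^{-\frac{(2j+1)\pi}{6}s}$. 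On the other side, the pentagonal number theorem gives $\eta(2i\tau)=\sum_{k\in\mathbb{Z}}(-1)^k e^{-\pi\tau(6k-1)^2/6}$, whence $e^{-(2\lambda-\frac16)\pi\tau}\eta(2i\tau)=\sum_k(-1)^k e^{-\frac{\pi}{6}(s^2+(6k-1)^2)\tau}$ and $I(\lambda)=\sum_k(-1)^k\widehat f\big(\tfrac{\pi}{6}(s^2+(6k-1)^2)\big)$, with $\widehat f$ the Laplace transform of $f$; plugging in the explicit $f$ and carrying out the summation then reproduces the series above up to a constant. The case $\lambda\in(0,\tfrac1{12})$ follows by the conventions of Section~\ref{section-intro}.

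I anticipate two \emph{genuine} difficulties. The first is to extract from~\cite{cfsx} the precise form of the layer-structure renewal property together with its modulus-reweighting, in exactly the shape needed to invoke Theorem~\ref{thm-cr-mod-relation}. The second is the explicit evaluation of $I(\lambda)$ — a modular/theta-type summation identity in the spirit of~\cite{ARS-Annulus, wu-annulus} — where one must be careful about convergence (the integral is finite only for $\lambda>0$, both sides of~\eqref{eq-CR-BM-counting-ell} being $+\infty$ at $\lambda=0$) and about sign and branch conventions once $s=\sqrt{12\lambda-1}$ becomes imaginary.
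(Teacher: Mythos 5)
Your overall architecture matches the paper's: identify $\nu_\D$ as absolutely continuous with respect to $\SLE^{\lp}_{8/3,\D}$ with density a function of $\Mod(\eta,\S^1)$, invoke Theorem~\ref{thm-cr-mod-relation}, and then evaluate the resulting $\tau$-integral. However, there is a genuine gap at the first step: you never identify the density $f$. The paper obtains it in one line by combining \cite[Lemma 5.3]{cfsx} with \cite[Theorem 1.3]{cfsx}, which give $\frac{d\nu_{\D}}{d\SLE^{\lp}_{8/3,\D}}(\eta)=\frac{C_1}{\Mod(\eta,\S^1)}$; your renewal-series sketch gestures at why \emph{some} modulus-dependent density exists, but without the explicit $f(\tau)=C_1/\tau$ the specific logarithm in~\eqref{eq-CR-BM-counting-ell} cannot be produced, and you flag this yourself as an unresolved difficulty rather than closing it.

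The second issue is your proposed evaluation of $I(\lambda)$. Writing $I(\lambda)=\sum_k(-1)^k\widehat f\bigl(\tfrac{\pi}{6}(s^2+(6k-1)^2)\bigr)$ via the pentagonal number theorem fails for the actual density: with $f(\tau)=C_1/\tau$, every term $\widehat f(a)=C_1\int_0^\infty e^{-a\tau}\tau^{-1}\,d\tau$ is $+\infty$ (the full integral $I(\lambda)$ converges only because $\eta(2i\tau)$ vanishes like $e^{-c/\tau}$ as $\tau\downarrow 0$, a cancellation the termwise expansion destroys). The paper avoids this by differentiating in the Laplace variable: it uses the closed form $\int_0^\infty e^{-a\tau}\eta(2i\tau)\,d\tau=\sqrt{\tfrac{\pi}{2a}}\,\tfrac{\sinh(\sqrt{2\pi a/3})}{\cosh(\sqrt{3\pi a/2})}$ from \cite[Equation (A.4)]{ARS-Annulus}, verifies that this equals $-\tfrac{d}{da}$ of the target logarithm, and integrates back in $a$ (with vanishing boundary term at $a=\infty$) to get $\int_0^\infty e^{-a\tau}\tfrac{\eta(2i\tau)}{\tau}\,d\tau$ in closed form. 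Your algebraic simplification of the logarithm (rewriting it as $\log\bigl[\coth(\tfrac{\pi}{12}(s+i))\coth(\tfrac{\pi}{12}(s-i))\bigr]$) is correct and could serve as a consistency check, but as a derivation the proposal is missing both the key input from \cite{cfsx} and a valid route to the integral identity.
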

\begin{proof}
By combining~\cite[Lemma 5.3]{cfsx} with~\cite[Theorem 1.3]{cfsx}, we have $\frac{d\nu_{\D}}{d\SLE^{\lp}_{8/3,\D}}(\eta)=\frac{C_1}{\Mod(\eta,\S^1)}$ for some constant $C_1>0$.
Then by Theorem~\ref{thm-cr-mod-relation}, we find
\begin{equation}\label{eq:cr-integral}
\int\CR(\eta,0)^\lambda\nu_{\D}(d\eta)=C
    \frac{\sqrt{12\lambda-1}}{\sinh(\frac{\pi}{3}\sqrt{12\lambda-1})}\int_0^\infty e^{-(2\lambda-\frac{1}{6})\pi\tau}\frac{\eta(2i\tau)}{\tau}d\tau.
\end{equation}
for some constant $C>0$. 
Note that for $a>-\frac{\pi}{6}$, we have $\int_0^\infty e^{-a\tau}\eta(2i\tau)d\tau=\sqrt{\frac{\pi}{2a}}\frac{\sinh\left(\sqrt{\frac{2}{3}\pi a}\right)}{\cosh\left(\sqrt{\frac{3}{2}\pi a}\right)}$ (see e.g.~\cite[Equation (A.4)]{ARS-Annulus}) and
    $$
    \frac{d}{da}\left[-\log\left(\frac{(2+\sqrt{3})^{2}+\tanh^{2}\left(\frac{1}{2}\sqrt{\frac{\pi}{6}a}\right)}{(2-\sqrt{3})^2+\tanh^{2}\left(\frac{1}{2}\sqrt{\frac{\pi}{6}a}\right)}\right)\right]=\sqrt{\frac{\pi}{2a}}\frac{\sinh\left(\sqrt{\frac{2}{3}\pi a}\right)}{\cosh\left(\sqrt{\frac{3}{2}\pi a}\right)}.
    $$
Therefore, we obtain
    $$
    \int_0^\infty e^{-a\tau}\frac{\eta(2i\tau)}{\tau}d\tau=\log\left(\frac{(2+\sqrt{3})^{2}+\tanh^{2}\left(\frac{1}{2}\sqrt{\frac{\pi}{6}a}\right)}{1+(2+\sqrt{3})^{2}\tanh^{2}\left(\frac{1}{2}\sqrt{\frac{\pi}{6}a}\right)}\right).
    $$
for $a>-\frac{\pi}{6}$. Combined with the right side of~\eqref{eq:cr-integral}, we conclude.
\end{proof}

To obtain Proposition~\ref{prop-cr-rho} from Lemmas~\ref{lem-CR-BM-ell1} and~\ref{lem-CR-BM-counting-ell}, we also need the following i.i.d.~structure of $\{\ell_i\}_{i\ge1}$, which is a radial variant of the Brownian beads decomposition established in~\cite{brownian-beads}.

\begin{lemma}\label{lem-independent-decomposition-BM}
\label{lem-independent-BM-CR}
Set $\wt\ell_0=\S^1$ and $s_0=0$. For $n\ge1$, we have:
\begin{enumerate}[(i)]
\item Let $g_n:D(\ell_n)\to\D$ be the conformal map such that $g_n(0)=0$ and $g_n(B_{t_n})=1$. Then $g_n(\wt\ell_n)$ is independent of $\ell_n$ and $\wt\ell_{n-1}$. In particular, $\frac{\CR(\wt\ell_n,0)}{\CR(\ell_n,0)}$ is independent of $\ell_n$ and $\wt\ell_{n-1}$.
\item Let $\wt g_n:D(\wt\ell_n)\to\D$ be the conformal map such that $\wt g_n(0)=0$ and $\wt g_n(B_{s_n})=1$. Then $(\wt g_n(B_t))_{t\in[0,s_n]}$ is independent of $(B_t)_{t\in[s_n,\tau_\D]}$, and has the same law (up to a time change) as $(B_t)_{t\in[0,\tau_\D]}$ conditioned on $B_{\tau_{\D}}=1$.
\end{enumerate}
As a corollary, $\left(\frac{\CR(\wt\ell_n,0)}{\CR(\ell_n,0)}\right)_{n\ge1}$ and $\left(\frac{\CR(\ell_{n+1},0)}{\CR(\wt\ell_n,0)}\right)_{n\ge0}$ are two i.i.d.~r.v.~sequences, and mutually independent of each other.
\end{lemma}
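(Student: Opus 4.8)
The plan is to deduce (i) and (ii) from conformal invariance of planar Brownian motion together with the Brownian cut‑point (``beads'') decomposition — the radial analogue of~\cite{brownian-beads}, for which~\cite{cfsx} supplies the relevant framework — and then to obtain the corollary by iterating (ii), using (i) to decouple the two annuli produced at each level. A preliminary observation I would record first is that $\ell_1$ and $\wt\ell_1$ depend only on the ``bead'' $B[s_1,t_1]$: since $B[0,s_1)$ is connected, contains $0$, and is disjoint from $B[s_1,t_1]$, it lies in $D(\wt\ell_1)$, and $\ol{D(\wt\ell_1)}\subset\hull(B[s_1,t_1])$, so $\hull(B[0,t_1])=\hull(B[s_1,t_1])$; hence $\ell_1$ is the outer boundary of $\hull(B[s_1,t_1])$ while $\wt\ell_1$ is, by definition, the boundary of the component of $\D\setminus B[s_1,t_1]$ containing $0$. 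Since moreover $t_1$ is the \emph{first} cut time of $(B_t)_{t\in[s_1,\tau_\D]}$, it follows that $\ell_1,\wt\ell_1$ — and hence $X_1:=\CR(\wt\ell_1,0)/\CR(\ell_1,0)=\CR(\rho,0)$ and $Y_0:=\CR(\ell_1,0)$ — are measurable with respect to $(B_t)_{t\in[s_1,\tau_\D]}$. (I write $X_n:=\CR(\wt\ell_n,0)/\CR(\ell_n,0)$ and $Y_n:=\CR(\ell_{n+1},0)/\CR(\wt\ell_n,0)$ in general.)

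For (i), I would condition on the pair $(\ell_1,B_{t_1})$ and argue that, by conformal invariance of Brownian motion, the conditional law of $g(B[0,t_1])$ as a random curve in $\ol\D$ from $0$ to $1$ does not depend on $(\ell_1,B_{t_1})$ — this is the radial form of the statement that the conformally normalized ``beads'' of a Brownian path have a universal law. Since $g(D(\wt\ell_1))$ is the connected component of $\D\setminus g(B[s_1,t_1])$ containing $0$, the loop $g(\wt\ell_1)$ is a deterministic functional of $g(B[0,t_1])$, hence independent of $\ell_1$; as $f$ and $g$ differ by a rotation fixing $0$, $\CR(\rho,0)=\CR(g(\wt\ell_1),0)$ is a functional of $g(\wt\ell_1)$, which gives the conclusion.

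Part (ii) is the same mechanism one level deeper: $s_1$ is a cut point of $B[0,\tau_\D]$, so the cut‑point decomposition gives that, conditionally on $(\wt\ell_1,B_{s_1})$ — equivalently on $(B_t)_{t\in[s_1,\tau_\D]}$, of which these are functions — the piece $B[0,s_1]$ is a Brownian motion in $D(\wt\ell_1)$ from $0$ conditioned to exit at $B_{s_1}$ (the remaining constraints in the definitions of $t_1,s_1$ involve only $(B_t)_{t\in[s_1,\tau_\D]}$), so applying $\wt g$ shows $\wt g(B[0,s_1])$ has, up to a time change, the law of $(B_t)_{t\in[0,\tau_\D]}$ conditioned on $B_{\tau_\D}=1$, and is independent of $(B_t)_{t\in[s_1,\tau_\D]}$. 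For the corollary I would set $P:=(\wt g(B_t))_{t\in[0,s_1]}$ and observe that, since the cut times of $B[0,s_1]$ are exactly those of $B$ in $(0,s_1)$, namely $t_2>s_2>t_3>\cdots$, one has $\wt g(\ell_{n+1})=\ell_n(P)$ and $\wt g(\wt\ell_{n+1})=\wt\ell_n(P)$ for all $n\ge1$, where $\ell_n(P),\wt\ell_n(P)$ are the loops of $P$ defined as in Definition~\ref{def:cut-point} (with $\wt\ell_0(P)=\S^1$). Because $\wt g$ fixes $0$, ratios of conformal radii seen from $0$ are preserved, so the analogues for $P$ satisfy $X_n(P)=X_{n+1}$ ($n\ge1$) and $Y_n(P)=Y_{n+1}$ ($n\ge0$). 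By (ii) and invariance of conformal radii under time changes, $\big((X_n(P))_{n\ge1},(Y_n(P))_{n\ge0}\big)$ has the law of $\big((X_n)_{n\ge1},(Y_n)_{n\ge0}\big)$ computed for $B[0,\tau_\D]$ conditioned on $B_{\tau_\D}=1$; and since the $X_n,Y_n$ are ratios of conformal radii about $0$, hence rotation invariant, this conditioning may be dropped by rotational symmetry. Therefore $\big((X_{n+1})_{n\ge1},(Y_{n+1})_{n\ge0}\big)\overset{d}{=}\big((X_n)_{n\ge1},(Y_n)_{n\ge0}\big)$ and, being a functional of $P$, it is independent of $(X_1,Y_0)$. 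Iterating this (the conditioned path again satisfies (ii), by rotational symmetry) shows the blocks $(X_{n+1},Y_n)$, $n\ge0$, are i.i.d., and then (i) applied at each level gives $X_{n+1}\perp Y_n$ within each block, which upgrades the conclusion to the assertion that $(X_n)_{n\ge1}$ and $(Y_n)_{n\ge0}$ are independent i.i.d.\ sequences.

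The main obstacle will be the cut‑point decomposition underlying (i) and (ii). Because $t_1$ and $s_1$ are defined through the whole trajectory and are not stopping times, the strong Markov property does not apply directly, and one must instead use the fine structure of Brownian cut points — approximating by the discrete set of cut times together with a last‑exit/time‑reversal argument, in the spirit of~\cite{brownian-beads} — to see that conditionally on the decomposition the two pieces are independent with the claimed conformally universal laws. The accompanying topological claims (that $\hull(B[0,t_1])=\hull(B[s_1,t_1])$, and that the relevant loops are measurable with respect to the indicated path segments) are routine but will need to be verified with some care.
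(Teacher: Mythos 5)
Your treatment of part (ii) and of the corollary follows the paper's route: the paper likewise passes to the time reversal $Y_t=B_{\tau_\D-t}$ (conditioned on $B_{\tau_\D}=1$), observes that $\tau_\D-s_1$ is a stopping time for the cut-time filtration of~\cite{brownian-beads}, and applies~\cite[Proposition 14]{brownian-beads}; the corollary is then the same telescoping and rotational-invariance argument you describe. Your preliminary reduction (that $\ell_1$ and $\wt\ell_1$ are functionals of the single bead $B[s_1,t_1]$) is correct and indeed implicitly needed for the corollary.

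The gap is in part (i). You propose to condition on $(\ell_1,B_{t_1})$ and assert, ``by conformal invariance,'' that the conditional law of $g(B[0,t_1])$ is universal, citing the universality of normalized beads. That is exactly the step that neither conformal invariance nor the beads decomposition delivers. First, the conditioning is singular and depends on the entire trajectory: $t_1$ is the \emph{largest} cut time with simple outer boundary, so whether a given time qualifies as $t_1$ depends on $B[t_1,\tau_\D]$ as well, and there is no Markov-type decomposition at $t_1$ (unlike $\tau_\D-s_1$, the time $\tau_\D-t_1$ is not a stopping time for the reversed cut-time filtration). Second, and more fundamentally, by your own preliminary observation both $\ell_1$ and $g(\wt\ell_1)$ are functionals of the \emph{same} bead $B[s_1,t_1]$, so (i) concerns the internal structure of a single bead and no cut-point decomposition can separate the two quantities; the ``universal law of a normalized bead'' says nothing about the bead's conditional law given its hull boundary. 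What is needed --- and what the paper uses --- is conformal \emph{restriction}: for compact $A\subset\ol\D$ with $\D\setminus A$ simply connected and containing $0$, the law of $B[0,\tau_\D]$ conditioned on $\{B[0,\tau_\D]\cap A=\emptyset\}$ is the conformal image (fixing $0$) of the unconditioned law, under which the normalized loop $g(\wt\ell_1)$ is invariant; hence $g(\wt\ell_1)$ is independent of every such positive-probability avoidance event, and varying $A$ yields independence of $\ell_1$. Your argument for (i) needs to be replaced by (or reduced to) a restriction argument of this kind; as written, the key conditional universality is asserted without a viable proof route.
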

\begin{proof}
    We first prove (ii). Due to rotational invariance, it suffices to consider the case when conditioning on $B_{\tau_\D}=1$.
    Let $(Y_t)_{t\ge0}$ be the time reversal of $(B_t)_{t\in[0,\tau_\D]}$ conditioned on $B_{\tau_{\D}}=1$. Note that $Y_t$ is indeed a Brownian excursion in $\D$ from 1 to 0. 
    Let $(\mathcal{F}_t)_{t\ge0}$ be the natural filtration of $(Y_t)_{t\ge0}$, and $(\mathcal{G}_t)_{t\ge0}$ be the cut time filtration defined in~\cite[Section 3]{brownian-beads} (i.e.~$\mathcal{G}_t$ is the $\sigma$-field generated by $\mathcal{F}_t$ and the set of all cut times of $(Y_t)_{t\ge0}$ before time $t$).
    By~\cite[Proposition 14]{brownian-beads}, if $\tau$ is a $(\mathcal{G}_t)_{t\ge0}$-stopping time such that $\tau$ is also a cut time of $(Y_t)_{t\ge0}$, then the conditional law of $(Y_{\tau+u})_{u \ge0}$ given $\mathcal{G}_\tau$ is a Brownian excursion from $Y_\tau$ to 0 in the connected component of $\D\backslash Y[0,\tau]$ containing $0$. Now we consider $s_n':=\tau_\D-s_n$. Note that
    $s_n'$ is the first cut time of $(Y_t)_{t\ge0}$ after $s_{n-1}'$ such that $Y[s'_{n-1},s'_n]$ disconnects the origin from $\S^1$, thus is a $(\mathcal{G}_t)_{t\ge0}$-stopping time. Therefore, the conditional law of $(\wt g_n(Y_{s_n'+u}))_{u\ge0}$ given $\mathcal{G}_{s_n'}$ is an independent Brownian excursion (up to a time change) in $\D$ from $1$ to $0$. By taking the time reversal of $(Y_t)_{t\ge0}$, we obtain the result.

    Then we prove (i). 
    According to (ii), $(\wt g_{n-1}(B_t))_{t\in[0,s_{n-1}]}$ is independent of $(\wt\ell_{n-1},B_{s_{n-1}})$, and has the same law (up to a time change) as $(B_t)_{t\in[0,\tau_\D]}$ conditioned on $B_{\tau_{\D}}=1$. Since $(g_n(\wt\ell_n),\wt g_{n-1}(\ell_n))$ is determined by $(\wt g_{n-1}(B_t))_{t\in[0,s_{n-1}]}$, it follows that $(g_n(\wt\ell_n),\wt g_{n-1}(\ell_n))$ is independent of $(\wt\ell_{n-1},B_{s_{n-1}})$.

    Now we show that $g_n(\wt\ell_n)$ and $\wt g_{n-1}(\ell_n)$ are independent.
    Let $h_n:D(\wt g_{n-1}(\ell_n))\to\D$ be the conformal map with $h_n(0)=0$ and $h_n(\wt g_{n-1}(B_{t_n}))=1$ (hence  $g_n=h_n\circ(\wt g_{n-1}|_{D(\ell_n)})$).
    Let $A\subset\ol\D$ be relatively compact such that $\overline{\D\backslash A}$ is simply connected and contains $0,1$. By the conformal restriction property of Brownian excursion, the law of $(\wt g_{n-1}(B_t))_{t\in[0,s_{n-1}]}$ conditioned on the event $\wt g_{n-1}(B[0,s_{n-1}])\cap A=\emptyset$ is the same as the Brownian motion (up to a time change) in $\D\backslash A$ starting from $0$ and conditioned on exiting $\D\backslash A$ at $1$. Combined with the conformal invariance of the Brownian motion, it implies that $h_n\circ\wt g_{n-1}(\wt\ell_n)$ is independent of the event $\wt g_{n-1}(B[0,s_{n-1}])\cap A=\emptyset$. By varying $A$, we obtain that $g_n(\wt\ell_n)$ is independent of the outer boundary of $\wt g_{n-1}(B[0,s_{n-1}])$, and hence $\wt g_{n-1}(\ell_n)$.
    
    Combining the previous two paragraphs, we see that $g_n(\wt\ell_n)$, $\wt g_{n-1}(\ell_n)$, $(\wt\ell_{n-1},B_{s_{n-1}})$ are mutually independent. Since $\wt g_{n-1}$ is determined by $\wt\ell_{n-1}$ and $B_{s_{n-1}}$, we have $\sigma(\wt g_{n-1}(\ell_n),\wt\ell_{n-1},B_{s_{n-1}})$ equals $\sigma(\ell_n,\wt\ell_{n-1},B_{s_{n-1}})$. Therefore, $g_n(\wt\ell_n)$ is independent of the triple $(\ell_n,\wt\ell_{n-1},B_{s_{n-1}})$, as desired.
\end{proof}

Now Proposition~\ref{prop-cr-rho} follows by combining Lemma~\ref{lem-independent-BM-CR} and Lemmas~\ref{lem-CR-BM-ell1} and~\ref{lem-CR-BM-counting-ell} above.
\begin{proof}[Proof of Proposition~\ref{prop-cr-rho}]
By Lemma~\ref{lem-independent-BM-CR} and the definition of $\nu_{\D}$, we have
$$
\begin{aligned}
    \int\CR(\eta,0)^\lambda\nu_{\D}(d\eta)&=\sum\limits_{n\ge1}\E[\CR(\ell_n,0)^\lambda]
    =\sum\limits_{n\ge1}\E\left[\left(\prod\limits_{k=2}^n\frac{\CR(\ell_k,0)}{\CR(\wt\ell_{k-1},0)}\cdot\frac{\CR(\wt\ell_{k-1},0)}{\CR(\ell_{k-1},0)}\right)^\lambda\CR(\ell_1,0)^\lambda\right]\\
    &=\sum\limits_{n\ge1}\E[\CR(\ell_1,0)^\lambda]^n\E[\CR(\rho,0)^\lambda]^{n-1}=\frac{\E[\CR(\ell_1,0)^\lambda]}{1-\E[\CR(\ell_1,0)^\lambda]\E[\CR(\rho,0)^\lambda]}.
\end{aligned}
$$
Thus, combined with Lemmas~\ref{lem-CR-BM-ell1} and~\ref{lem-CR-BM-counting-ell}, we obtain
\begin{align}
    \E[\CR(\rho,0)^\lambda]&=\frac{1}{\E[\CR(\ell_1,0)^\lambda]}-\frac{1}{\nu_{\D}[\CR(\eta,0)^\lambda]}\nonumber\\
    &=\frac{\sinh(\frac{\pi}{2}\sqrt{12\lambda-1})}{\sqrt{12\lambda-1}}-\frac{\sinh(\frac{\pi}{3}\sqrt{12\lambda-1})}{C\sqrt{12\lambda-1}\log\left(\frac{(2+\sqrt{3})^{2}+\tanh^{2}\left(\frac{\pi}{12}\sqrt{12\lambda-1}\right)}{1+(2+\sqrt{3})^{2}\tanh^{2}\left(\frac{\pi}{12}\sqrt{12\lambda-1}\right)}\right)}.\label{eq:cr-rho-c}
\end{align}
Here the constant $C$ is the same as the one in Lemma~\ref{lem-CR-BM-counting-ell}.
Now, note that~\eqref{eq:cr-rho-c} becomes $\frac{e^{\pi\sqrt{3\lambda}}}{4\sqrt{3\lambda}}(1-\frac{1}{2\sqrt{3}C})(1+o(1))$ as $\lambda\to\infty$. Since we must have $\E[\CR(\rho,0)^\lambda]\to0$ as $\lambda\to\infty$, we find $C=\frac{\sqrt{3}}{6}$. Taking this into~\eqref{eq:cr-rho-c}, we conclude.
\end{proof}

As a corollary of Proposition~\ref{prop-cr-rho}, by the following special case of Tauberian theorem, we can derive the tail probabilities of $\CR(\rho,0)$ and $\CR(\wt\ell_1,0)$, which will be used in Section~\ref{section-backbone-sharp}.

\begin{lemma}\label{thm:tauberian}
Suppose $\mathcal{X}$ is a positive random variable such that $\E[1-e^{-\lambda \mathcal{X}}]=\frac{C}{|\log\lambda|}(1+o(1)),\ \lambda\downarrow0$ for some constant $C>0$. Then $\P[\mathcal{X}>t]=\frac{C}{\log t}(1+o(1))$ as $t\to\infty$.
\end{lemma}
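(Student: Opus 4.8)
\textbf{Proof plan for Lemma~\ref{thm:tauberian}.}

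The plan is to invoke the classical Karamata–de Haan Tauberian theory for Laplace transforms and the corresponding tail behavior, specialized to the slowly varying function $L(t)=1/\log t$. First I would rewrite the hypothesis in terms of the Laplace transform $\phi(\lambda):=\E[e^{-\lambda\mathcal X}]$, so that $1-\phi(\lambda)=\frac{C}{|\log\lambda|}(1+o(1))$ as $\lambda\downarrow 0$. The quantity $1-\phi(\lambda)$ is, up to the usual manipulations, the Laplace transform of the tail: integrating by parts, $1-\phi(\lambda)=\lambda\int_0^\infty e^{-\lambda t}\P[\mathcal X>t]\,dt$. So the statement to be proved is exactly a Tauberian statement: if the Laplace–Stieltjes transform of $t\mapsto\P[\mathcal X>t]$ behaves like $\frac{C}{\lambda}\cdot\frac{1}{|\log\lambda|}$ as $\lambda\downarrow0$, then $\P[\mathcal X>t]\sim\frac{C}{\log t}$ as $t\to\infty$.

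The key steps, in order, are: (1) Check that $x\mapsto 1/\log x$ is slowly varying at infinity (immediate, since $\log(ax)/\log x\to 1$), so that the monotone-density / Karamata Tauberian machinery applies. (2) Apply the standard Tauberian theorem for Laplace transforms (e.g.~Feller, \emph{An Introduction to Probability Theory and Its Applications}, Vol.~II, Ch.~XIII.5, Theorem~4, or Bingham–Goldie–Teugels, \emph{Regular Variation}, Theorem~1.7.1): for a nondecreasing right-continuous $U$ with $U(0)=0$ and Laplace–Stieltjes transform $\omega(\lambda)=\int_0^\infty e^{-\lambda t}\,dU(t)$, one has $\omega(\lambda)\sim \lambda^{-\alpha}\ell(1/\lambda)$ as $\lambda\downarrow0$ (for slowly varying $\ell$ and $\alpha\ge 0$) if and only if $U(t)\sim \frac{1}{\Gamma(\alpha+1)} t^{\alpha}\ell(t)$ as $t\to\infty$. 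Here I would apply this with $U(t)=\int_0^t\P[\mathcal X>s]\,ds$, so that $\omega(\lambda)=\int_0^\infty e^{-\lambda t}\P[\mathcal X>t]\,dt=\frac{1-\phi(\lambda)}{\lambda}\sim\frac{C}{\lambda|\log\lambda|}$, which is of the required form with $\alpha=1$ and $\ell(x)=C/\log x$. This gives $U(t)\sim \frac{C\,t}{\log t}$. (3) Upgrade from the integrated tail $U(t)$ to the tail $\P[\mathcal X>t]$ itself via the monotone density theorem (Bingham–Goldie–Teugels, Theorem~1.7.2): since $s\mapsto\P[\mathcal X>s]$ is nonincreasing and $U(t)=\int_0^t\P[\mathcal X>s]\,ds\sim \frac{C\,t}{\log t}=:V(t)$ with $V$ regularly varying of index $1$ and $V'(t)\sim C/\log t$, the monotone density theorem yields $\P[\mathcal X>t]\sim V'(t)\sim\frac{C}{\log t}$, which is the claim.

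The main obstacle — really the only subtlety — is the final step (3): the monotone density theorem requires the integrand to be monotone, which is fine here since $s\mapsto\P[\mathcal X>s]$ is nonincreasing, but one must be a little careful that the index of regular variation is $\alpha=1>0$ (the case $\alpha=0$ would need the integrated-tail version and is genuinely more delicate), and that $V(t)=Ct/\log t$ is genuinely regularly varying with a locally integrable, ultimately monotone derivative asymptotic to $C/\log t$. All of these hold, so the deduction goes through cleanly. An alternative, essentially equivalent route that avoids the monotone density theorem is to apply the Tauberian theorem in the ``de Haan'' form directly to the distribution function: since $1-\phi(\lambda)\to 0$ means $\mathcal X<\infty$ a.s., and $1-\phi(\lambda)$ is the Laplace transform of the measure $\P[\mathcal X\in dt]$ tested against $1-e^{-\lambda t}$, one can also argue via $\E[1-e^{-\lambda\mathcal X}]=\int_0^\infty \lambda e^{-\lambda t}\P[\mathcal X>t]\,dt$ together with a direct sandwiching argument exploiting slow variation of $1/\log t$; I would mention this as an alternative but carry out the BGT-theorem route as the main proof since it is shortest.
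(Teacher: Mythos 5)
Your proposal is correct, but it takes a different route from the paper. You reduce the statement to the Karamata Tauberian theorem applied to the integrated tail $U(t)=\int_0^t\P[\mathcal X>s]\,ds$ (correctly identifying the index $\alpha=1$ with slowly varying part $\ell(x)=C/\log x$, so that $U(t)\sim Ct/\log t$) and then recover the tail itself via the monotone density theorem, which applies since $s\mapsto\P[\mathcal X>s]$ is nonincreasing and the index is $1>0$; every step checks out. The paper instead gives a short, self-contained elementary argument — essentially the ``direct sandwiching'' route you mention in passing at the end: the upper bound $\varlimsup_{t\to\infty}\log t\cdot\P[\mathcal X>t]\le C$ comes from $\E[1-e^{-\lambda\mathcal X}]\ge(1-e^{-\lambda t})\P[\mathcal X>t]$ with $\lambda=M/t$ and $M\to\infty$, and the lower bound $\varliminf\ge C$ from splitting $\int_0^\infty\lambda e^{-\lambda s}\P[\mathcal X>s]\,ds$ at $s=t$ and taking $\lambda=t^{-\alpha}$ with $\alpha\downarrow1$. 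Your approach buys generality and brevity-by-citation at the cost of importing the regular-variation machinery (Feller/BGT); the paper's buys a five-line proof with no external input. Either is acceptable here.
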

\begin{proof}
Note that $\E[1-e^{-\lambda\mathcal{X}}]\ge(1-e^{-\lambda t})\P[\mathcal{X}>t]$ for $t>0$. Fix $M>0$ and take $\lambda=\frac{M}{t}$, by letting $t\to\infty$, then we find $\varlimsup\limits_{t\to\infty}\log t\cdot \P[\mathcal{X}>t]\le C(1-e^{-M})^{-1}$. Since $M>0$ is arbitrary, we have $\varlimsup\limits_{t\to\infty}\log t\cdot\P[\mathcal{X}>t]\le C$. On the other hand, note that for $t>1$,
\begin{equation}\label{eq:tauberian}
\log t\cdot\E[1-e^{-\lambda\mathcal{X}}]=\log t\cdot\int_0^\infty\lambda e^{-\lambda s}\P[\mathcal{X}>s]ds\le \log t\cdot(1-e^{-\lambda t})+\log t\cdot e^{-\lambda t}\P[\mathcal{X}>t]
\end{equation}
Choose $\alpha>1$. Taking $\lambda=t^{-\alpha}$ and then $t\to\infty$ in~\eqref{eq:tauberian} yields $\varliminf\limits_{t\to\infty}\log t\cdot\P[\mathcal{X}>t]\ge \alpha^{-1}C$. The lemma then follows by taking $\alpha\downarrow1$.
\end{proof}

\begin{corollary}
\label{cor-estimate-cr-rho}
We have $\P[\CR(\rho,0)<\varepsilon]=\frac{3}{\log|\log\varepsilon|}(1+o(1))$ and $\P[\CR(\wt\ell_1,0)<\varepsilon]=\frac{3}{\log|\log\varepsilon|}(1+o(1))$ as $\varepsilon\downarrow0$.
\end{corollary}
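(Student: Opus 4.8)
The plan is to obtain both tail estimates from the Tauberian Lemma~\ref{thm:tauberian}, applied to the a.s.\ positive random variables $\mathcal{X}:=-\log\CR(\rho,0)$ and $\mathcal{X}':=-\log\CR(\wt\ell_1,0)$; positivity holds because $\rho$ and $\wt\ell_1$ are loops strictly surrounding $0$ inside $\D$, so $\CR(\rho,0),\CR(\wt\ell_1,0)\in(0,1)$ a.s. Since $e^{-\lambda\mathcal{X}}=\CR(\rho,0)^\lambda$, we have $\E[1-e^{-\lambda\mathcal{X}}]=1-\E[\CR(\rho,0)^\lambda]$, and similarly for $\mathcal{X}'$; hence it suffices to prove that, as $\lambda\downarrow0$,
\[
1-\E[\CR(\rho,0)^\lambda]=\frac{3}{|\log\lambda|}\big(1+o(1)\big)\qquad\text{and}\qquad 1-\E[\CR(\wt\ell_1,0)^\lambda]=\frac{3}{|\log\lambda|}\big(1+o(1)\big).
\]
Granting this, Lemma~\ref{thm:tauberian} with $C=3$ gives $\P[\mathcal{X}>t]=\frac{3}{\log t}(1+o(1))$ as $t\to\infty$, and substituting $\varepsilon=e^{-t}$ (so $|\log\varepsilon|=t$ for $\varepsilon\in(0,1)$) turns this into $\P[\CR(\rho,0)<\varepsilon]=\P[\mathcal{X}>|\log\varepsilon|]=\frac{3}{\log|\log\varepsilon|}(1+o(1))$, and likewise for $\CR(\wt\ell_1,0)$.

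The substantive step is the expansion of the right-hand side of~\eqref{eq:cr-rho} from Proposition~\ref{prop-cr-rho} near $\lambda=0$, carried out in the analytic-continuation regime $\lambda\in(0,\tfrac1{12})$ via the conventions $\sinh(\sqrt x)=i\sin(\sqrt{|x|})$ and $\tanh(\sqrt x)=i\tan(\sqrt{|x|})$ for $x<0$. Writing $u:=\sqrt{1-12\lambda}\in(0,1)$, the first term of~\eqref{eq:cr-rho} is $\sin(\tfrac{\pi}{2}u)/u$, which is smooth in $\lambda$ with value $1$ at $\lambda=0$, hence $=1+O(\lambda)$; and the factor $\sinh(\tfrac{\pi}{3}\sqrt{12\lambda-1})/\sqrt{12\lambda-1}=\sin(\tfrac{\pi}{3}u)/u$ in the second term tends to $\sin(\tfrac{\pi}{3})=\tfrac{\sqrt3}{2}$, so its numerator $2\sqrt3\,\sin(\tfrac{\pi}{3}u)/u$ tends to $3$. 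The key point is the logarithm in the denominator: since $\tan(\tfrac{\pi}{12})=2-\sqrt3$ and $(2+\sqrt3)(2-\sqrt3)=1$, at $\lambda=0$ one has $\tanh^2(\tfrac{\pi}{12}\sqrt{12\lambda-1})=-\tan^2(\tfrac{\pi}{12})=-(2+\sqrt3)^{-2}$, so the denominator $1+(2+\sqrt3)^2\tanh^2(\tfrac{\pi}{12}\sqrt{12\lambda-1})$ of the log-argument vanishes at $\lambda=0$ while its numerator $(2+\sqrt3)^2+\tanh^2(\tfrac{\pi}{12}\sqrt{12\lambda-1})$ does not. A first-order Taylor expansion in $\lambda$ shows this denominator vanishes \emph{linearly} in $\lambda$, with a positive coefficient, so the argument of the logarithm is $c\,\lambda^{-1}(1+o(1))$ for some explicit $c>0$; therefore the logarithm equals $|\log\lambda|(1+o(1))$ and the second term of~\eqref{eq:cr-rho} equals $\tfrac{3}{|\log\lambda|}(1+o(1))$. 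Subtracting, $\E[\CR(\rho,0)^\lambda]=1+O(\lambda)-\tfrac{3}{|\log\lambda|}(1+o(1))=1-\tfrac{3}{|\log\lambda|}(1+o(1))$, since $O(\lambda)=o(1/|\log\lambda|)$; this is the first of the two displayed asymptotics.

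For the second, recall $\CR(\wt\ell_1,0)=\CR(\rho,0)\,\CR(\ell_1,0)$ and that $\CR(\rho,0)$ is independent of $\ell_1$ (Lemma~\ref{lem-independent-decomposition-BM}(i)), so $\E[\CR(\wt\ell_1,0)^\lambda]=\E[\CR(\rho,0)^\lambda]\,\E[\CR(\ell_1,0)^\lambda]$. By Lemma~\ref{lem-CR-BM-ell1}, $\E[\CR(\ell_1,0)^\lambda]=\sqrt{12\lambda-1}/\sinh(\tfrac{\pi}{2}\sqrt{12\lambda-1})=u/\sin(\tfrac{\pi}{2}u)=1+O(\lambda)$, so
\[
1-\E[\CR(\wt\ell_1,0)^\lambda]=\big(1-\E[\CR(\rho,0)^\lambda]\big)+\E[\CR(\rho,0)^\lambda]\big(1-\E[\CR(\ell_1,0)^\lambda]\big)=\frac{3}{|\log\lambda|}\big(1+o(1)\big),
\]
again because $O(\lambda)=o(1/|\log\lambda|)$ and $\E[\CR(\rho,0)^\lambda]\to1$. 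This supplies both Tauberian inputs, completing the plan. I expect the main obstacle to be the bookkeeping in the expansion of the logarithmic term: verifying that the denominator of the log-argument vanishes at exactly order $\lambda$ (anything faster would break $\log(\cdot)\sim|\log\lambda|$) and tracking signs so that this argument is a \emph{large positive} quantity, which is precisely what yields both the constant $3$ and the correct sign $\E[\CR(\rho,0)^\lambda]<1$.
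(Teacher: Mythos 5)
Your proposal is correct and follows essentially the same route as the paper: apply Proposition~\ref{prop-cr-rho} (and, for $\wt\ell_1$, the factorization $\E[\CR(\wt\ell_1,0)^\lambda]=\E[\CR(\rho,0)^\lambda]\,\E[\CR(\ell_1,0)^\lambda]$ from Lemmas~\ref{lem-independent-decomposition-BM} and~\ref{lem-CR-BM-ell1}), expand near $\lambda=0$ using the analytic-continuation conventions to get $1-\tfrac{3}{|\log\lambda|}(1+o(1))$ — the key being that $1+(2+\sqrt3)^2\tanh^2(\tfrac{\pi}{12}\sqrt{12\lambda-1})$ vanishes linearly at $\lambda=0$ since $\tan(\tfrac{\pi}{12})=2-\sqrt3$ — and then invoke Lemma~\ref{thm:tauberian} with $\mathcal{X}=-\log\CR(\rho,0)$. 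Your write-up actually spells out the Taylor-expansion bookkeeping that the paper leaves implicit.
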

\begin{proof}
By Proposition~\ref{prop-cr-rho}, when $\lambda\in[0,\frac{1}{12})$, we have
\[
\E[\CR(\rho,0)^\lambda]=\frac{\sin(\frac{\pi}{2}\sqrt{1-12\lambda})}{\sqrt{1-12\lambda}}-\frac{2\sqrt{3}\sin(\frac{\pi}{3}\sqrt{1-12\lambda})}{\sqrt{1-12\lambda}\log\left(\frac{(2+\sqrt{3})^{2}-\tan^{2}\left(\frac{\pi}{12}\sqrt{1-12\lambda}\right)}{1-(2+\sqrt{3})^{2}\tan^{2}\left(\frac{\pi}{12}\sqrt{1-12\lambda}\right)}\right)}.
\]
In particular, $\E[\CR(\rho,0)^\lambda]=1-\frac{3}{|\log\lambda|}(1+o(1))$ as $\lambda\downarrow0$. By Lemma~\ref{thm:tauberian} (taking $\mathcal{X}$ there to be $-\log\CR(\rho,0)$), we find $\P[\CR(\rho,0)<\varepsilon]\sim\frac{3}{\log|\log\varepsilon|}$ as $\varepsilon\downarrow0$.

Similarly, combining Proposition~\ref{prop-cr-rho} and Lemma~\ref{lem-CR-BM-ell1}, for $\lambda\in[0,\frac{1}{12})$, we have
\[
\E[\CR(\wt\ell_1,0)^\lambda]=\E[\CR(\rho,0)^\lambda]\E[\CR(\ell_1,0)^\lambda]=1-\frac{2\sqrt{3}\sin(\frac{\pi}{3}\sqrt{1-12\lambda})}{\sin(\frac{\pi}{2}\sqrt{1-12\lambda})\log\left(\frac{(2+\sqrt{3})^{2}-\tan^{2}\left(\frac{\pi}{12}\sqrt{1-12\lambda}\right)}{1-(2+\sqrt{3})^{2}\tan^{2}\left(\frac{\pi}{12}\sqrt{1-12\lambda}\right)}\right)}.
\]
Hence $\E[\CR(\wt\ell_1,0)^\lambda]=1-\frac{3}{|\log\lambda|}(1+o(1))$ as $\lambda\downarrow0$, and we conclude by using Lemma~\ref{thm:tauberian}.
\end{proof}

\section{Proof of Theorem~\ref{thm:backbone}}
\label{section-backbone-sharp}
In this section, we prove Theorem~\ref{thm:backbone} based on the exact laws of the conformal radii in Section~\ref{section-backbone-1}. The general proof steps are akin to~\cite[proof of Lemma 2.6]{jego2023crossing}. Recall that $\mathsf{Bac}_\varepsilon$ is the event that there exist two disjoint subpaths on $B[0,\tau_\D]$ from $\varepsilon\S^1$ to $\frac{1}{2}\S^1$.
We first need the following continuous version of Menger's theorem on the connectedness of a graph.

\begin{theorem}[{\cite{nob32-n-arc,zippin-33-n,whyburn1948n}}]
\label{thm-n-arc}
Let $n\ge2$. Suppose $X$ is a connected, locally connected, and locally compact metric space such that $X$ cannot be disconnected by removing any $n-1$ points of $X$. Then for any two points $x,y\in X$, there exist $n$ arcs connecting $x$ and $y$, which are pairwise disjoint except on $x$ and $y$. Here an arc stands for a subset of $X$ that is homeomorphic to $[0,1]$.
\end{theorem}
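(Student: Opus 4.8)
The plan is to deduce the statement from the classical finite Menger theorem on graphs by a compactness and approximation argument, after first reducing to the case of a Peano continuum; the point of routing through finite graphs is that the genuinely combinatorial content --- the rerouting of paths encoded in augmenting-path arguments --- then gets absorbed into the finite theorem, which one may cite. As a preliminary reduction I would replace $X$ by a Peano subcontinuum: since the $n$ arcs sought form a compact set, it suffices to produce a compact, connected, locally connected $K\subset X$ with $x,y\in K$ which still cannot be separated between $x$ and $y$ by fewer than $n$ of its points. One constructs $K$ by starting from a single arc from $x$ to $y$ (available because a connected, locally connected, locally compact metric space is arcwise connected) and successively enlarging it: whenever the current subcontinuum has a point $p$ separating $x$ from $y$ within it, the fact that $p$ does not separate $X$ yields a connected subset of $X$ joining the two sides around $p$, along which a Peano ``bridge'' is attached, and one passes to a limit. (If $X$ is already compact, as in the application here, this step is essentially vacuous.) Henceforth $X$ is a Peano continuum, so it is arcwise connected, and every connected open subset of $X$ is locally compact, completely metrizable, hence arcwise connected.

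Next I would set up the graph approximation. For $\delta>0$, cover $X$ by finitely many connected open sets of diameter $<\delta$, arranged so that $x$ and $y$ lie in distinguished members $U_x,U_y$; let $G_\delta$ be the intersection graph of this cover, with distinguished vertices $v_x,v_y$. Then $G_\delta$ is connected. The crucial claim is that for all sufficiently small $\delta$, no $n-1$ vertices separate $v_x$ from $v_y$ in $G_\delta$: otherwise there is a sequence $\delta_k\downarrow0$ for which, for each $k$, some $n-1$ connected open sets of diameter $<\delta_k$ have union separating $x$ from $y$ in $X$; passing to a subsequence so that the closures of these sets converge in the Hausdorff metric --- necessarily to single points, as their diameters vanish --- one argues that the $n-1$ limiting points separate $x$ from $y$ in $X$, contradicting the hypothesis. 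Granting the claim, the finite Menger theorem produces, for small $\delta$, $n$ internally vertex-disjoint paths from $v_x$ to $v_y$, whose unions of cover elements form $n$ connected ``tubes'' $P^{(1)},\dots,P^{(n)}$ that are pairwise disjoint outside $U_x\cup U_y$.

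Finally, I would upgrade the tubes to arcs. Inside each connected open tube $P^{(j)}$ pick an arc $\alpha^{(j)}$ from $x$ to $y$ (using arcwise connectedness of connected open subsets). These $n$ arcs are pairwise disjoint outside $U_x\cup U_y$; to keep them from tangling near $x$ and $y$ I would refine the cover near these two points --- exploiting that $X$ is locally arcwise connected and that, by hypothesis with $n\ge2$, removing a single point does not disconnect $X$ --- so that each tube enters a small ball about $x$ (resp.\ about $y$) through a separate channel joined to $x$ (resp.\ $y$) by an arc meeting no other channel, and splice these local arcs in. The result is $n$ arcs from $x$ to $y$, pairwise disjoint except at $\{x,y\}$.

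I expect the main obstacle to be the topological-to-combinatorial translation in the middle step: proving that separating $X$ between $x$ and $y$ by $n-1$ arbitrarily small sets forces separation by $n-1$ points is exactly where the hypothesis enters, and making the Hausdorff-limit argument rigorous --- controlling how the complementary components of the small separators behave in the limit --- is the delicate core. A secondary difficulty is that subcontinua of a Peano continuum need not be arcs, so the tubes-to-arcs passage must be carried out at a fixed small scale (not by taking a Hausdorff limit of the tubes) while maintaining disjointness near the endpoints. An alternative, more classical route --- the one behind the cited references --- bypasses graphs and argues by induction on $n$, with base case the fact that a Peano continuum having no point that separates $x$ from $y$ contains a simple closed curve through $x$ and $y$, and with the inductive step performing the continuum analogue of an augmenting-path rerouting via Whyburn's cyclic element theory~\cite{whyburn1948n}.
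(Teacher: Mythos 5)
First, a point of calibration: the paper does not prove Theorem~\ref{thm-n-arc} at all --- it is the classical $n$-arc theorem of Menger--N\"obeling--Zippin--Whyburn, quoted with references --- and in the only place it is used (Lemma~\ref{lem:mcmf}) the space $X=B[s_1,t_1]$ is compact, so your Peano-continuum reduction is indeed vacuous there. The question is therefore only whether your sketch would amount to a proof of the cited theorem. Your graph-approximation skeleton is a legitimate route, and the middle step is essentially sound: if $n-1$ cover elements of diameter $<\delta_k$ separate $v_x$ from $v_y$ in the nerve, the components of the remaining nerve yield disjoint open sets $A_k\ni x$, $B_k\ni y$ covering $X$ minus the union $W_k$ of the removed elements, and any arc from $x$ to $y$ avoiding the Hausdorff-limit points would, for large $k$, avoid $W_k$ and hence be a connected set meeting both $A_k$ and $B_k$, a contradiction (modulo the bookkeeping needed when a limit point coincides with $x$ or $y$).

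The genuine gap is the final step. The $n$ tubes are disjoint only outside $U_x\cup U_y$, and producing $n$ arcs emanating from $x$ into the $n$ tubes, pairwise disjoint except at $x$, is precisely N\"obeling's $n$-Beinsatz (the $n$-leg theorem) --- a statement of the same depth as the one being proved. ``Refining the cover near $x$'' does not supply it: local connectedness gives arbitrarily small connected neighborhoods of $x$, but no control on how many points are needed to separate $x$ from the tubes inside such a neighborhood, and the global non-separation hypothesis does not localize. Taking a Hausdorff limit of the arcs as $\delta\downarrow 0$ also fails, since limits of arcs need not be arcs --- a trap you correctly identify but do not escape. Your closing remark names the classical resolution (induction on $n$ via cyclic element theory, or the $n$-leg theorem as a lemma), but as written the proposal defers rather than closes the crux. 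A secondary under-specified point is the Peano reduction in the genuinely locally compact case: iteratively attaching bridges to kill single separating points does not obviously produce a subcontinuum in which no set of $n-1$ points separates $x$ from $y$.
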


\begin{lemma}\label{lem:mcmf}
    $\mathsf{Bac}_\varepsilon$ occurs
    if $\ell_n\cap\A_{1/2}\neq\emptyset$ and $\wt\ell_n\cap\varepsilon\D\neq\emptyset$ for some $n\ge1$.
\end{lemma}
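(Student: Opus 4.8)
The plan is to verify the topological hypotheses of Theorem~\ref{thm-n-arc} for a suitable subset of the Brownian trajectory and then extract two disjoint arcs from it. First I would recall the geometry: $\ell_1$ is the outer boundary of $B[0,t_1]$, while $\wt\ell_1$ is the boundary of the component of $\D\setminus B[s_1,t_1]$ containing $0$, and $t_1>s_1>t_2$ are consecutive cut times with $B[0,t_1)\cap B(t_1,\tau_\D]=\emptyset$. Under the stated hypotheses $\ell_1$ reaches out to $\A_{1/2}$ (so it touches $\tfrac12\S^1$, being a connected set containing $0$ in its inside that meets the annulus $\A_{1/2}=\{1/2<|z|<1\}$), and $\wt\ell_1$ dips into $\varepsilon\D$. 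I would take as the candidate space $X$ the ``bead'' $K:=\ol{B[s_1,t_1]}$ — equivalently, the closed set whose complement's $0$-component is bounded by $\wt\ell_1$ — or more precisely the piece of the trajectory that simultaneously carries both $\wt\ell_1$ and (a reach to) $\tfrac12\S^1$. The key point is that a Brownian bead between two consecutive cut points has \emph{no cut point in its interior}, so removing a single point cannot disconnect it; this is exactly the $n=2$ non-separation hypothesis.

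Concretely, the steps I would carry out are: (1) identify a connected compact $X\subset B[0,\tau_\D]$ that contains a point $a\in\varepsilon\S^1$ and a point $b\in\tfrac12\S^1$, such that $X$ has no cut point. For $b$: since $\ell_1\cap\A_{1/2}\ne\emptyset$ and $\ell_1$ is a simple loop surrounding $0$, by connectedness $\ell_1$ (hence $B[0,t_1]$) meets $\tfrac12\S^1$; one then walks along $B[0,t_1]$ to a cut point to splice this reach onto the bead. For $a$: since $\wt\ell_1\cap\varepsilon\D\ne\emptyset$ and $\wt\ell_1$ surrounds $0$, $\wt\ell_1$ meets $\varepsilon\S^1$, and $\wt\ell_1\subset\partial(\D\setminus B[s_1,t_1])\subset B[s_1,t_1]$, so this point lies on the bead itself. (2) Argue $X$ is locally connected and locally compact: any Brownian path is compact and locally connected (it is a continuous image of an interval, hence a Peano continuum, and a finite union of such is again one), so $X$ is a Peano continuum and all topological hypotheses of Theorem~\ref{thm-n-arc} hold. (3) Verify the non-separation by a single point. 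Here I would use that $t_1$ and $s_1$ are consecutive cut times, so $B(s_1,t_1)$ has no cut time, and translate ``no cut point of the bead'' into ``removing one point of $X$ leaves it connected'': if some $z$ separated $X$, then $z$ would be a cut point of the relevant subpath, contradicting consecutiveness of $s_1,t_1$ (one must be mildly careful at the two endpoints $B_{s_1},B_{t_1}$ and about the spliced reach to $\tfrac12\S^1$, choosing $X$ so that these are genuinely non-separating). (4) Apply Theorem~\ref{thm-n-arc} with $n=2$, $x=a$, $y=b$ to get two arcs in $X\subset B[0,\tau_\D]$ joining $\varepsilon\S^1$ and $\tfrac12\S^1$, disjoint except possibly at $a,b$; then shorten them slightly near the endpoints (retaining subarcs from $\varepsilon\S^1$ to $\tfrac12\S^1$) to make them fully disjoint, which yields $\Bac_\varepsilon$.

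The main obstacle I expect is step (3) together with the bookkeeping in step (1): one has to choose $X$ so that it is simultaneously (a) a subset of the trajectory, (b) contains points on $\varepsilon\S^1$ and $\tfrac12\S^1$, and (c) has genuinely \emph{no} separating point — and the natural candidate (the closed bead $\ol{B[s_1,t_1]}$) only obviously reaches $\varepsilon\S^1$ via $\wt\ell_1$, not $\tfrac12\S^1$, so the reach to $\tfrac12\S^1$ must be appended through $B[0,t_1]$ while keeping the non-separation property. The cleanest way is probably to take $X$ to be $\ol{B[s_1,t_1]}$ together with a minimal sub-path of $B[t_2,s_1]\cup B[0,t_1]$ realizing a connection to $\tfrac12\S^1$ inside the outer boundary structure, and then note any separating point of this enlarged set would have to be a cut point of the corresponding Brownian sub-path strictly between $t_2$ and $t_1$, contradicting that $s_1$ is the last cut time before $t_1$ and $t_1,t_{i}$ are ranked cut times. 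Everything else — local connectedness, the final arc-surgery — is routine. It is worth remarking that the lemma gives only a sufficient condition ($\Bac_\varepsilon\supset\{\ell_1\cap\A_{1/2}\ne\emptyset,\ \wt\ell_1\cap\varepsilon\D\ne\emptyset\}$), which combined with Corollary~\ref{cor-estimate-cr-rho} (controlling $\CR(\wt\ell_1,0)$ and $\CR(\ell_1,0)$, hence these two geometric events via Koebe-type estimates) will deliver the lower bound in Theorem~\ref{thm:backbone}.
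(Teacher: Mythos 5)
Your overall strategy---apply the $n$-arc theorem (Theorem~\ref{thm-n-arc}) with $n=2$ to a piece of the trajectory with no separating point, then trim the two arcs---is exactly the paper's, and your treatment of the reach to $\varepsilon\S^1$ is correct ($\wt\ell_1\subset B[s_1,t_1]$, so $\wt\ell_1\cap\varepsilon\D\neq\emptyset$ puts a point of the bead in $\varepsilon\D$). The gap is in how you reach $\tfrac12\S^1$. You correctly flag this as the main obstacle, but your proposed fix---enlarging $X=B[s_1,t_1]$ by splicing on a sub-path of $B[t_2,s_1]\cup B[0,t_1]$ carrying the point of $\ell_1\cap\A_{1/2}$---destroys the hypothesis of Theorem~\ref{thm-n-arc}. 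Since $s_1$ is a cut time, $B[0,s_1)\cap B(s_1,\tau_\D]=\emptyset$, so any spliced path coming from the earlier part of the trajectory can meet the bead only at the single point $B_{s_1}$; that point then separates the enlarged set. Your claim that ``any separating point of the enlarged set would have to be a cut point strictly between $t_2$ and $t_1$, contradicting that $s_1$ is the last cut time before $t_1$'' is therefore false: the separating point is $B_{s_1}$ itself, which is a cut point at time $s_1$, and no contradiction arises. Indeed, if the connection to $\A_{1/2}$ genuinely had to pass through $B[0,s_1)$, the two-arm conclusion would fail, since both arms would be forced through $B_{s_1}$.

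The missing observation, which is what the paper uses implicitly when it asserts that ``there are $s,t\in[s_1,t_1]$ such that $B_s\in\varepsilon\D$ and $B_t\in\A_{1/2}$,'' is that no enlargement is needed because $\ell_1\subset B[s_1,t_1]$. Indeed, a.s.\ $B_{s_1}$ is not a double point, so $B[0,s_1)$ is a connected subset of $\D\setminus B[s_1,t_1]$ containing $0$, hence contained in the component $U$ of $\D\setminus B[s_1,t_1]$ containing the origin, i.e.\ enclosed by $\wt\ell_1$. Since $\wt\ell_1\subset B[s_1,t_1]\subset B[0,t_1]$ separates $U$ from $\infty$, no point of $B[0,s_1)$ can lie on the boundary of the unbounded component of $\C\setminus B[0,t_1]$; therefore $\ell_1\subset B[0,t_1]\setminus B[0,s_1)\subset B[s_1,t_1]$. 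With this in hand, $\ell_1\cap\A_{1/2}\neq\emptyset$ already yields $t\in[s_1,t_1]$ with $B_t\in\A_{1/2}$, and your steps (2)--(4) then go through on $X=B[s_1,t_1]$ itself, exactly as in the paper.
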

\begin{proof}
Recall $s_n$ and $t_n$ in Definition~\ref{def:cut-point}.
Note that $B[s_n,t_n]$ forms a connected, locally connected, and locally compact metric space when equipped with the Euclidean metric on $\C$, and $B[s_n,t_n]$ cannot be disconnected by removing any single point (otherwise it would contradict the definition of $s_n$). Furthermore, on the event $\{\ell_n\cap\A_{1/2}\neq\emptyset\}\cap\{\wt\ell_n\cap\varepsilon\D\neq\emptyset\}$, there are $s,t\in[s_n,t_n]$ such that $B_s\in\varepsilon\D$ and $B_t\in\A_{1/2}$.
Thus by Theorem~\ref{thm-n-arc},
there exist two arcs on $B[s_n,t_n]$ from $B_s$ to $B_t$, staying disjoint apart from their endpoints. The result then follows by taking the sub-trajectories of these two arcs from their last hitting points of $\varepsilon\S^1$ to their first hitting points of $\frac{1}{2}\S^1$, respectively.
\end{proof}

We also need the following input.
\begin{lemma}\label{lem-finite-sum}
Define
\begin{equation}\label{eq:s}
S:=\sum_{n\ge1}\P[\ell_n\cap\A_{1/2}\neq\emptyset].
\end{equation}
Then $S\in(0,\infty)$.
\end{lemma}
\begin{proof}
For $n\ge2$, we have
\begin{align}
\P[\ell_n\cap\A_{1/2}\neq\emptyset]&\le\P[\CR(\ell_n,0)\ge e^{-(n-1)}]+\P[\ell_n\cap\A_{1/2}\neq\emptyset,\CR(\ell_n,0)<e^{-(n-1)}]\nonumber\\
&\le \P[n-1\ge \log\CR(\ell_n,0)^{-1}]+\P[\ell_n\cap\A_{1/2}\neq\emptyset,\dist(\ell_n,0)<e^{-(n-1)}]\nonumber\\
&\le \P\left[n-1\ge \sum_{i=1}^{n-1}\log\frac{\CR(\wt\ell_{i-1},0)}{\CR(\wt\ell_{i},0)}\right]+\P[\ell_n\cap\A_{1/2}\neq\emptyset,\dist(\ell_n,0)<e^{-(n-1)}].\label{eq:intersect}
\end{align}
By Lemma~\ref{lem-independent-BM-CR}, $\left(\log\frac{\CR(\wt\ell_{i-1},0)}{\CR(\wt\ell_{i},0)}\right)_{i\ge1}$ are i.i.d. random variables. Furthermore, Corollary~\ref{cor-estimate-cr-rho} yields that $\P\left[\log\frac{1}{\CR(\wt\ell_1,0)}>t\right]=\frac{3}{\log t}(1+o(1))$ as $t\to\infty$, hence $\E\left[\log\frac{1}{\CR(\wt\ell_1,0)}\right]=+\infty$. By the large deviation principle, there exists $u\in(0,1)$ such that 
\begin{equation}
\label{eq-lem-finite-sum-1}
\P\left[n-1\ge \sum_{i=1}^{n-1}\log\frac{\CR(\wt\ell_{i-1},0)}{\CR(\wt\ell_{i},0)}\right]\le u^{n-1}.
\end{equation}

We now bound the second term on the right side of~\eqref{eq:intersect} by the Brownian disconnection exponent derived in~\cite{lsw-bm-exponents2}. For $r\in(0,1)$, let $\tau_r=\inf\{t\ge0:|B_t|=r\}$ and $\sigma_r=\sup\{t\in[0,\tau_\D]:|B_t|=r\}$. Then for $s<r<1$, $(B_t)_{\sigma_s\le t\le\tau_r}$ has the same law as the Brownian excursion on annulus $\A_s\backslash\A_r$. By~\cite{lsw-bm-exponents2}, the probability that $(B_t)_{\sigma_s\le t\le\tau_r}$ does not disconnect $s\S^1$ and $r\S^1$ equals $(\frac{s}{r})^{1/4+o(1)}$ as $s\to0$ (we remark that such probability was exactly computed in the recent paper~\cite{cfsx}).
Since $(\ell_n\cap\A_{1/2}\neq\emptyset,\dist(\ell_n,0)<e^{-(n-1)})$ implies that $B[\sigma_{e^{-(n-1)}},\tau_{1/2}]$ does not disconnect $e^{-(n-1)}\S^1$ and $\frac{1}{2}\S^1$, we find
\begin{equation}
\label{eq-lem-finite-sum-2}
\P[\ell_n\cap\A_{1/2}\neq\emptyset,\dist(\ell_n,0)<e^{-(n-1)}]\le e^{-\frac{1}{4}(n-1)(1+o(1))}
\end{equation}
as $n\to\infty$.
The result then follows by combining~\eqref{eq:intersect},~\eqref{eq-lem-finite-sum-1} and~\eqref{eq-lem-finite-sum-2}.
\end{proof}

In the following, we will prove Theorem~\ref{thm:backbone} by showing that
\begin{equation}\label{eq:c-s}
\lim_{\varepsilon\to0}\P[\Bac_\varepsilon]\cdot(\log|\log\varepsilon|)=3S,
\end{equation}
where $S$ is defined in~\eqref{eq:s}. We first give the lower bound.
\begin{proof}[Proof of Theorem~\ref{thm:backbone}, the lower bound]
For $n\ge1$, let $A_{n,\varepsilon}=\{\ell_n\cap\A_{1/2}\neq\emptyset,\wt\ell_n\cap\varepsilon\D\neq\emptyset\}$, which satisfies $\cup_{n\ge1}A_{n,\varepsilon}\subset\Bac_\varepsilon$ by Lemma~\ref{lem:mcmf}. Furthermore, let $B_{n,\varepsilon}\subset A_{n,\varepsilon}$ be the event $\{\dist(\wt\ell_{n-1},0)>\varepsilon,\ell_n\cap\A_{1/2}\neq\emptyset,\frac{\CR(\wt\ell_{n},0)}{\CR({\ell_{n}},0)}\le \varepsilon\}$. Note that $B_{n,\varepsilon}\cap B_{m,\varepsilon}=\emptyset$ for every $n\neq m$. Thus, we have
\begin{equation}\label{eq:lower}
\P[\Bac_\varepsilon]\ge\sum_{n\ge1}\P[B_{n,\varepsilon}]=\P[\CR(\rho,0)<\varepsilon]\cdot\sum_{n\ge1}\P[\dist(\wt\ell_{n-1},0)>\varepsilon,\ell_n\cap\A_{1/2}\neq\emptyset],
\end{equation}
where we use Lemma~\ref{lem-independent-BM-CR} that $\frac{\CR(\wt\ell_{n},0)}{\CR({\ell_{n}},0)}$ is independent of $\ell_n$ and $\wt\ell_{n-1}$ in the second equality (recall that $\rho$ is the loop defined above Proposition~\ref{prop-cr-rho}).
Combining~\eqref{eq:lower} with Corollary~\ref{cor-estimate-cr-rho} and Lemma~\ref{lem-finite-sum}, we obtain that $\liminf_{\varepsilon\to0}\P[\Bac_\varepsilon]\cdot(\log|\log\varepsilon|)\ge3S$, as desired.
\end{proof}

Now we turn to the upper bound of Theorem~\ref{thm:backbone}, i.e.~$\limsup_{\varepsilon\to0}\P[\Bac_\varepsilon]\cdot(\log|\log\varepsilon|)\le3S$. This is based on the following basic fact.

\begin{lemma}
    \label{lem-variant-Koebe-1/4}
    Let $\varepsilon\in(0,10^{-2})$. Suppose $\ell\subset\D$ is a simple loop surrounding the origin with $\dist(\ell,0)>2\sqrt{\varepsilon}$, and let $f: D(\ell)\to \D$ be any conformal map such that $f(0)=0$. Then $f(\varepsilon\D)\subset2\sqrt{\varepsilon}\D$.
\end{lemma}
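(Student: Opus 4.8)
The plan is to derive both statements from the Schwarz lemma: the first by applying it to (a rescaling of) $g$, the second by applying it to $g^{-1}$. Write $r:=\dist(\ell,0)>5\sqrt{\varepsilon}$; since $\varepsilon<10^{-2}$ we have $\varepsilon<5\sqrt{\varepsilon}<1$, so in particular $\varepsilon<r$, a numerical fact used freely below. First I would record two elementary geometric inclusions. Since $0\in D(\ell)$ (implicit in the hypothesis that $g$ fixes $0$) and the open disk $r\D$ is connected, contains $0$, and is disjoint from $\ell=\partial D(\ell)$, we have $r\D\subset D(\ell)$. Likewise $\C\setminus\D$ is connected, unbounded, and disjoint from $\ell$, hence lies in the unbounded component of $\C\setminus\ell$; therefore $D(\ell)\subset\D$.

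For the inclusion $\overline{g(\varepsilon\D)}\subset5\sqrt{\varepsilon}\,\D$: the map $w\mapsto g(rw)$ sends $\D$ holomorphically into $\D$ and fixes the origin, so by the Schwarz lemma $|g(rw)|\le|w|$ for $w\in\D$, i.e.\ $|g(z)|\le|z|/r$ for all $z\in r\D$. Consequently $g$ maps the compact set $\overline{\varepsilon\D}\subset r\D$ into $\overline{(\varepsilon/r)\D}$, and by continuity of $g$ there, $\overline{g(\varepsilon\D)}\subseteq g(\overline{\varepsilon\D})\subseteq\overline{(\varepsilon/r)\D}$. Since $\varepsilon/r<\varepsilon/(5\sqrt{\varepsilon})=\tfrac15\sqrt{\varepsilon}<5\sqrt{\varepsilon}$, the claim follows.

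For the inclusion $g(\tfrac12\D\cap D(\ell))\supset\tfrac12\D$: the inverse map $g^{-1}\colon\D\to D(\ell)\subset\D$ is holomorphic and fixes the origin, so the Schwarz lemma gives $|g^{-1}(w)|\le|w|$ for all $w\in\D$. Hence $g^{-1}(\tfrac12\D)\subset\tfrac12\D$, and trivially $g^{-1}(\tfrac12\D)\subset D(\ell)$, so $g^{-1}(\tfrac12\D)\subset\tfrac12\D\cap D(\ell)$. Applying the bijection $g\colon D(\ell)\to\D$ and using $\tfrac12\D\subset\D$, we get $\tfrac12\D=g\bigl(g^{-1}(\tfrac12\D)\bigr)\subseteq g(\tfrac12\D\cap D(\ell))$, as desired. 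There is no genuine obstacle here; the only points requiring mild care are the passage to closures in the first part (handled by compactness of $\overline{\varepsilon\D}\subset D(\ell)$ together with continuity of $g$) and the inclusion $D(\ell)\subset\D$, which is exactly what licenses applying Schwarz to $g^{-1}$ with target $\D$.
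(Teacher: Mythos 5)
Your proof is correct. The second inclusion (Schwarz applied to $g^{-1}$, using $D(\ell)\subset\D$) is exactly the paper's argument. For the first inclusion you diverge: the paper notes $\tfrac{1}{|g'(0)|}=\CR(\ell,0)\ge\dist(\ell,0)>5\sqrt{\varepsilon}$ and applies Koebe's $1/4$ theorem to the normalized map $\widetilde g(z)=\tfrac{|g'(0)|}{5\sqrt{\varepsilon}}g^{-1}(5\sqrt{\varepsilon}z)$, deducing that $g^{-1}(5\sqrt{\varepsilon}\D)$ contains a disk of radius comparable to $\varepsilon$; you instead apply the Schwarz lemma to $w\mapsto g(rw)$ with $r=\dist(\ell,0)$, using the (correctly justified) inclusion $r\D\subset D(\ell)$ to get $|g(z)|\le|z|/r$ on $r\D$, hence $\overline{g(\varepsilon\D)}\subset\overline{(\varepsilon/r)\D}\subset\tfrac{\sqrt{\varepsilon}}{5}\D\subset 5\sqrt{\varepsilon}\D$. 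Your route is more elementary (no univalence or distortion theory needed, only boundedness of $g$) and in fact yields a stronger conclusion, $\overline{g(\varepsilon\D)}\subset\tfrac{\sqrt{\varepsilon}}{5}\D$; the paper's Koebe argument has the mild advantage of passing only through the conformal radius $\CR(\ell,0)$, which is the quantity controlled elsewhere in Section~\ref{section-backbone-2}, but for the statement as given both arguments are complete. The bookkeeping you flag (compactness of $\overline{\varepsilon\D}\subset r\D$ for the closure, and $D(\ell)\subset\D$ to license Schwarz for $g^{-1}$) is handled correctly.
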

\begin{proof}
    By applying Schwartz's lemma for $f^{-1}$, we have $|f'(0)|^{-1}=\CR(\ell,0)\ge\dist(\ell,0)>2\sqrt{\varepsilon}$. Let $\widetilde{f}(z):=\frac{|f'(0)|}{2\sqrt{\varepsilon}}f^{-1}(2\sqrt{\varepsilon}z)$; note that $\widetilde{f}(0)=0$ and $|\widetilde{f}'(0)|=1$. By Koebe's 1/4 theorem, we have $\frac{1}{4}\D\subset\widetilde{f}(\D)$, which implies~$\frac{1}{4}\D\subset \frac{|f'(0)|}{2\sqrt{\varepsilon}}f^{-1}(2\sqrt{\varepsilon}\D)\subset \frac{1}{4\varepsilon}f^{-1}(2\sqrt{\varepsilon}\D)$, as desired.
\end{proof}

\begin{proof}[Proof of Theorem~\ref{thm:backbone}, the upper bound]
For $\varepsilon\in(0,10^{-10})$, let $\delta=2\sqrt{\varepsilon}$. Let 
$A_{n,\delta}=\{\ell_n\cap\A_{1/2}\neq\emptyset,\wt\ell_n\cap\delta\D\neq\emptyset\}$ be as above. Then by the definition of $A_{1,\delta}$, we have
\begin{align}
\P[\Bac_\varepsilon]&\le\P[\Bac_\varepsilon\backslash \cup_{n\ge1}A_{n,\delta}]+\P[\cup_{n\ge1}A_{n,\delta}]\nonumber\\
&\le \P[\Bac_\varepsilon,\ell_1\cap\A_{1/2}=\emptyset]+\P[\Bac_\varepsilon\backslash\cup_{n\ge2}A_{n,\delta},\ell_1\cap\A_{1/2}\neq\emptyset,\wt\ell_1\cap\delta\D=\emptyset]+\P[\cup_{n\ge1}A_{n,\delta}]\nonumber\\
&\le\P[\ell_1\cap\varepsilon\D\neq\emptyset]+\P[\Bac_\varepsilon\backslash \cup_{n\ge2}A_{n,\delta},\wt\ell_1\cap\A_{1/2}\neq\emptyset,\wt\ell_1\cap\delta\D=\emptyset]+\P[\cup_{n\ge1}A_{n,\delta}].\label{eq-proof-lower-0a}
\end{align}
Here the last inequality uses the fact that $\{\Bac_\varepsilon,\ell_1\cap\A_{1/2}=\emptyset\}$ implies $\ell_1\cap\varepsilon\D\neq\emptyset$, while $\{\Bac_\varepsilon\backslash\cup_{n\ge2}A_{n,\delta},\ell_1\cap\A_{1/2}\neq\emptyset,\wt\ell_1\cap\delta\D=\emptyset\}$ implies $\{\Bac_\varepsilon\backslash \cup_{n\ge2}A_{n,\delta},\wt\ell_1\cap\A_{1/2}\neq\emptyset,\wt\ell_1\cap\delta\D=\emptyset\}$. Similarly, by~\eqref{eq-proof-lower-0a} and using the definition of $A_{n,\delta}$ iteratively, we have
\begin{equation*}
\begin{aligned}
\P[\Bac_\varepsilon]&\le\P[\ell_1\cap\varepsilon\D\neq\emptyset]+\P[\wt\ell_1\cap\A_{1/2}\neq\emptyset,\wt\ell_1\cap\delta\D=\emptyset,\ell_2\cap\varepsilon\D\neq\emptyset]\\
&+\P[\Bac_\varepsilon\backslash \cup_{n\ge3}A_{n,\delta},\wt\ell_2\cap\A_{1/2}\neq\emptyset,\wt\ell_2\cap\delta\D=\emptyset]+\P[\cup_{n\ge1}A_{n,\delta}]\\
&\le...\\
&\le\P[\ell_1\cap\varepsilon\D\neq\emptyset]+\sum_{n=1}^m\P[\wt\ell_n\cap\A_{1/2}\neq\emptyset,\wt\ell_n\cap\delta\D=\emptyset,\ell_{n+1}\cap\varepsilon\D\neq\emptyset]\\
&+\P[\Bac_\varepsilon\backslash \cup_{n\ge m+2}A_{n,\delta},\wt\ell_{m+1}\cap\A_{1/2}\neq\emptyset,\wt\ell_{m+1}\cap\delta\D=\emptyset]+\P[\cup_{n\ge1}A_{n,\delta}]
\end{aligned}
\end{equation*}
for any $m\ge1$. Taking $m\to\infty$ and using that $\lim\limits_{n\to\infty}\P[\wt\ell_n\cap\A_{1/2}\neq\emptyset]=0$, we obtain that
\begin{equation}
\label{eq-proof-lower-0b}
\P[\Bac_\varepsilon] \le\P[\ell_1\cap\varepsilon\D\neq\emptyset]+\sum_{n\ge1}\P[\wt\ell_n\cap\A_{1/2}\neq\emptyset,\wt\ell_n\cap\delta\D=\emptyset,\ell_{n+1}\cap\varepsilon\D\neq\emptyset]+\P[\cup_{n\ge1}A_{n,\delta}].
\end{equation}

It suffices to bound each of the three terms on the right side of~\eqref{eq-proof-lower-0b}. First, by Lemma~\ref{lem-CR-BM-ell1} and Koebe's 1/4 theorem, there exists $C_1\in(0,\infty)$ such that
\begin{equation}
\label{eq-proof-lower-1}
\P[\ell_1\cap\varepsilon\D\neq\emptyset]\le C_1\varepsilon^{\frac{1}{4}}.
\end{equation}

Recall that $\wt g_n$ is the conformal map from $D(\wt\ell_n)$ to $\D$ that fixes the origin with $\wt g_n(B_{s_n})=1$. By Lemma~\ref{lem-variant-Koebe-1/4}, $\{\wt\ell_n\cap\delta\D=\emptyset\}$ implies $\{\wt g_n(\varepsilon\D)\subset \delta\D\}$.
Furthermore, by Lemma~\ref{lem-independent-BM-CR}, we know that $\wt g_n(\ell_{n+1})$ is independent of $\wt\ell_n$, and $\P[\wt g_n(\ell_{n+1})\cap\delta\D\neq\emptyset]=\P[\ell_1\cap\delta\D\neq\emptyset]$. Hence, we have
\begin{align}
\sum_{n\ge1}\P[\wt\ell_n\cap\A_{1/2}\neq\emptyset,\wt\ell_n\cap\delta\D=\emptyset,\ell_{n+1}\cap\varepsilon\D\neq\emptyset]&\le \sum_{n\ge1}\P[\wt\ell_n\cap\A_{1/2}\neq\emptyset,\wt\ell_n\cap\delta\D=\emptyset,\wt g_n(\ell_{n+1})\cap\delta\D\neq\emptyset]\nonumber\\
&\le\P[\ell_1\cap\delta\D\neq\emptyset]\cdot\sum_{n\ge1}\P[\wt\ell_n\cap\A_{1/2}\neq\emptyset]\nonumber\\
&\le 2C_1S\varepsilon^{\frac{1}{8}},\label{eq-proof-lower-2}
\end{align}
where the last line uses~\eqref{eq-proof-lower-1} and $S$ is defined in~\eqref{eq:s}.

Now let $N_\delta=\inf\{n\ge1:A_{n,\delta}\text{ occur}\}$, and hence $\{\cup_{n\ge1}A_{n,\delta}\}=\{N_\delta<\infty\}$. By Koebe's 1/4 theorem, we have
\begin{align}
\P[\cup_{n\ge1}A_{n,\delta}]&=\P[N_\delta<\infty]=\P[N_\delta<\infty,\CR(\wt\ell_{N_\delta},0)\le 4\delta]\nonumber\\
&\le\P\left[N_\delta<\infty,\frac{\CR(\wt\ell_{N_\delta},0)}{\CR(\ell_{N_\delta},0)}<2\sqrt{\delta}\right]+\P[N_\delta<\infty,\CR(\ell_{N_\delta},0)<2\sqrt{\delta}].\label{eq-proof-lower-3}
\end{align}
Note that the definition of $N_\delta$ implies
\begin{align*}
\P\left[N_\delta<\infty,\frac{\CR(\wt\ell_{N_\delta},0)}{\CR(\ell_{N_\delta},0)}<2\sqrt{\delta}\right]&=\sum\limits_{n=1}^\infty\P\left[N_\delta=n,\frac{\CR(\wt\ell_n,0)}{\CR(\ell_n,0)}<2\sqrt{\delta}\right]\\
&\le \sum\limits_{n=1}^\infty\P\left[\ell_n\cap\A_{1/2}\neq\emptyset,\frac{\CR(\wt\ell_n,0)}{\CR(\ell_n,0)}<2\sqrt{\delta}\right]\\
&=\P[\CR(\rho,0)<2\sqrt{\delta}]\cdot S,
\end{align*}
where the last line follows from Lemma~\ref{lem-independent-BM-CR}. Meanwhile, $\{N_\delta<\infty,\CR(\ell_{N_\delta},0)<2\sqrt{\delta}\}$ implies $\{N_\delta<\infty, \ell_{N_\delta}\cap\A_{1/2}\neq\emptyset,\ell_{N_\delta}\cap2\sqrt{\delta}\D\neq\emptyset\}$, which further implies the Brownian trajectory $B[\sigma_{2\sqrt{\delta}},\tau_{1/2}]$ does not disconnect $2\sqrt{\delta}\S^1$ and $\frac{1}{2}\S^1$. Combined with~\eqref{eq-proof-lower-3} and using again the Brownian disconnection exponent $\frac{1}{4}$ from~\cite{lsw-bm-exponents2}, we find
\begin{align}
\P[\cup_{n\ge1}A_{n,\delta}]
&\le \P[\CR(\rho,0)<2\sqrt{\delta}]\cdot S+\P\left[B[\sigma_{2\sqrt{\delta}},\tau_{1/2}]\text{ does not disconnect }2\sqrt{\delta}\S^1\text{ and }\frac{1}{2}\S^1\right]\nonumber\\
&\le \P[\CR(\rho,0)<2\sqrt{\delta}]\cdot S+(4\sqrt{\delta})^{\frac{1}{4}+o(1)}.\label{eq-proof-lower-4}
\end{align}

Finally, by combining~\eqref{eq-proof-lower-0b},~\eqref{eq-proof-lower-1},~\eqref{eq-proof-lower-2},~\eqref{eq-proof-lower-4}, we have (recall that $\delta=2\sqrt{\varepsilon}$)
$$
\P[\Bac_\varepsilon]\le\P[\CR(\rho,0)<2\sqrt{\delta}]\cdot S+(4\sqrt{\delta})^{\frac{1}{4}+o(1)}+C_1\varepsilon^{\frac{1}{4}}+2C_1S\varepsilon^{\frac{1}{8}}.
$$
Using Corollary~\ref{cor-estimate-cr-rho}, we conclude that $\limsup_{\varepsilon\to0}\P[\Bac_\varepsilon]\cdot(\log|\log\varepsilon|)\le3S$, as desired.
\end{proof}

\appendix
\section{Proof of Theorem~\ref{thm-cr-mod-relation}}
\label{appendix}

In this appendix, we provide the proof of Theorem~\ref{thm-cr-mod-relation}, which is implicit in~\cite{ARS-Annulus}. For convenience, we consider the upper half plane $\hH:=\{z\in\C:\text{Im}(z)>0\}$, and let $\sm$ be the restriction of the $\SLE_{8/3}$ loop measure on $\C$ to the loops that are contained in $\hH$ and surround $i$. According to the conformal restriction~\cite{werner-loops}, if $\Phi:\D\to\hH$ is a conformal map with $\Phi(0)=i$, then $\sm=\Phi_*\SLE_{8/3,\D}^{\lp}$. Therefore, to prove Theorem~\ref{thm-cr-mod-relation}, it suffices to show the following
\begin{theorem}
\label{thm-appendix}
There is a constant $C>0$ such that for any measurable function $f:\R_+\to\R_+$ and $\lambda\ge0$, we have
$$
\int f\left(\Mod(\hH\backslash\overline{D(\eta)})\right)\psi_\eta'(i)^\lambda\sm(d\eta)=
C\frac{\sqrt{12\lambda-1}}{\sinh\left(\frac{\pi}{3}\sqrt{12\lambda-1}\right)}\int_0^\infty e^{-(2\lambda-\frac{1}{6})\pi\tau}\eta(2i\tau)f(\tau)d \tau.
$$
Here $\psi_\eta$ is the conformal map from $\hH$ to $D(\eta)$ such that $\psi_\eta(i)=i$ and $\psi_\eta'(i)>0$.
\end{theorem}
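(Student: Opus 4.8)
The plan is to combine the Liouville quantum gravity (LQG) description of the $\SLE_{8/3}$ loop measure from~\cite{ARS-Annulus} with the exact annulus partition functions of~\cite{wu-annulus}. Throughout write $\gamma=\sqrt{8/3}$ and $Q=\frac\gamma2+\frac2\gamma$, so that $Q^2=\frac{25}{6}$. Recall from~\cite{ARS-Annulus} that conformally welding a Brownian disk (the ``inside'') to an independent Brownian annulus (the ``annular piece'') along a common boundary circle produces a larger Brownian disk, and that the welding interface, after forgetting the field, is the $\SLE_{8/3}$ loop surrounding the marked bulk point, reweighted by an explicit function of the modulus of the annular piece, namely the quantum-annulus partition function. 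Working in $\hH$ with marked point $i$, this realizes $\sm$, up to an explicit function of $\Mod(\hH\setminus\overline{D(\eta)})$, as the law of the interface loop $\eta$ in this welded picture, where $\psi_\eta$ uniformizes $D(\eta)$ onto $\hH$ fixing $i$.

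The first key step is to encode the weight $\psi_\eta'(i)^\lambda$ as a bulk Liouville insertion placed at the marked point of the inside disk. A Brownian (equivalently, $\gamma$-LQG) surface carrying a bulk insertion of weight $\alpha$ at a marked point transforms under a conformal change of coordinates $\varphi$ by the factor $|\varphi'|^{-2\Delta_\alpha}$ at that point, where $\Delta_\alpha=\frac\alpha2(Q-\frac\alpha2)$. Choosing $\alpha=\alpha(\lambda)$ by matching $2\Delta_\alpha=-\lambda$, the factor generated when the inside disk is uniformized from $D(\eta)$ onto $\hH$ by $\psi_\eta^{-1}$ is exactly $\psi_\eta'(i)^\lambda$, up to an absolute constant absorbed into $C$. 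The admissible, convergent range of $\alpha$ corresponds to $\lambda\ge\frac1{12}$; for $\lambda\in[0,\frac1{12})$ one leaves this range and must pass to analytic continuation of the Liouville structure constants, which is exactly the caveat in the statement and is signalled by $\sqrt{12\lambda-1}$ becoming imaginary there. With this substitution, the left-hand side of the claimed identity becomes the total mass of the welded configuration in which (a) the inside is a Brownian disk with a bulk $\alpha$-insertion at $i$ and a free boundary of some length $\ell>0$, (b) the annular piece is a quantum annulus of modulus $\tau$ with inner boundary length $\ell$ and free outer boundary, and one integrates (c) over $\ell>0$, over the loop conditioned on its modulus $\tau$, and over $\tau>0$ against $f(\tau)$.

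It then remains to substitute the explicit partition functions and perform the $\ell$-integral. The bulk one-point function of the Brownian disk with an $\alpha$-insertion and a free boundary of length $\ell$ is the Liouville disk structure constant computed in~\cite{ARS-Annulus}, which for $\gamma=\sqrt{8/3}$ reduces to elementary functions; integrating it against the $\ell$-dependence of the quantum annulus collapses to a Gamma-type integral and yields the $\tau$-independent prefactor $\frac{\sqrt{12\lambda-1}}{\sinh(\frac\pi3\sqrt{12\lambda-1})}$. The quantum-annulus partition function with the boundary data above is precisely the annulus correlation function whose integrability is established in~\cite{wu-annulus}; after integrating out $\ell$ it equals, as a function of $\tau$ and up to a $\tau$-independent constant, $e^{-2\lambda\pi\tau}\prod_{n\ge1}(1-e^{-4\pi n\tau})=e^{-(2\lambda-\frac16)\pi\tau}\eta(2i\tau)$, where the factor $e^{-2\lambda\pi\tau}$ encodes the momentum of the inner boundary transported across the annulus, the infinite product is the oscillator contribution, and the $-\frac16$ in the exponent merely exhibits the Dedekind eta function $\eta(2i\tau)=e^{-\frac\pi6\tau}\prod_{n\ge1}(1-e^{-4\pi n\tau})$. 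Collecting all constants into a single $C>0$ and integrating the surviving $\tau$-dependence against $f$ gives the asserted formula.

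The main difficulty is bookkeeping: one must carefully track the Radon--Nikodym factors relating the ``quantum length/area'' normalization in which the welding of~\cite{ARS-Annulus} is phrased to the conformal-restriction normalization of $\sm$, and arrange the boundary-length integral in (c) so that the $f(\tau)$-dependence survives intact while the loop conditioned on its modulus drops out. A secondary, more routine point is to justify the passage to $\lambda\in[0,\frac1{12})$ by analytic continuation, both sides being analytic in $\lambda$ where defined and coinciding for $\lambda\ge\frac1{12}$.
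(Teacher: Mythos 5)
Your route is the same as the paper's: realize $\sm$ via the conformal welding of a Brownian disk and a Brownian annulus from~\cite{ARS-Annulus}, encode the derivative weight $\psi_\eta'(i)^\lambda$ by changing the bulk insertion at $i$ from $\gamma$ to a general $\alpha$, and then integrate out the boundary lengths using the quantum-annulus integrability of~\cite{wu-annulus} (the paper's Proposition~\ref{prop-LF-integrability}). The architecture is right, but your exponent matching is wrong in a way that would propagate to the final formula. Replacing the $\gamma$-insertion by an $\alpha$-insertion reweights the loop measure by $\psi_\eta'(i)^{2\Delta_\alpha-2\Delta_\gamma}=\psi_\eta'(i)^{2\Delta_\alpha-2}$ (this is the content of Proposition~\ref{prop-sle-reweight}: the factor $|\psi_\eta'(i)|^{(\alpha-\gamma)Q}$ from the $Q\log|\cdot|$ term in the coordinate change combines with the $\varepsilon$-scaling $\varepsilon^{\frac12(\alpha^2-\gamma^2)}$ to give exactly $2\Delta_\alpha-2\Delta_\gamma$). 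So the correct matching is $\lambda=2\Delta_\alpha-2$, not $2\Delta_\alpha=-\lambda$; a sanity check is that $\lambda=0$ must correspond to the unweighted case $\alpha=\gamma$, i.e.\ $\Delta_\alpha=1$, which your matching fails. With the correct matching one gets $(\alpha-Q)^2=Q^2-4\Delta_\alpha=\frac{25}{6}-2(\lambda+2)=\frac16-2\lambda$, which is precisely what produces the exponent $e^{-(2\lambda-\frac16)\pi\tau}$ and the argument $\frac{\gamma}{2}\pi(\alpha-Q)=\frac{\pi}{3}\sqrt{1-12\lambda}$ in the prefactor after cancelling $\Gamma(\frac{2}{\gamma}(\alpha-Q)+1)$ against the disk-side length integral; with your matching $(\alpha-Q)^2=\frac{25}{6}+2\lambda$ and the $\tau$-integral comes out wrong (indeed divergent for reasonable $f$), even though you then assert the correct final $\tau$-dependence.

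A secondary point: your statement about where analytic continuation is needed is backwards. Real $\alpha$ gives $\Delta_\alpha\le\frac{Q^2}{4}=\frac{25}{24}$, hence $\lambda=2\Delta_\alpha-2\le\frac1{12}$; so the LQG computation is carried out directly for a range of $\lambda$ below $\frac1{12}$ (subject to $\alpha>\frac\gamma2$ and $y=\frac2\gamma(\alpha-Q)\in(-1,\frac{4}{\gamma^2})$), and it is $\lambda\ge\frac1{12}$ that is reached by analyticity of both sides; the remark in the statement about $\lambda\in(0,\frac1{12})$ only concerns the convention for interpreting $\sqrt{12\lambda-1}$ when it is imaginary, not the range of validity of the welding argument.
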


The proof of Theorem~\ref{thm-appendix} is based on the SLE-coupled Liouville quantum gravity with parameter $\sqrt{\frac{8}{3}}$. In the following we fix $\gamma=\sqrt{\frac{8}{3}}$ and let $Q=\frac{\gamma}{2}+\frac{2}{\gamma}$. Let $\P_{\hH}$ be the law of the free boundary Gaussian free field (GFF) on $\hH$ with mean zero on the upper semi-circle $\S^1\cap\hH$. 

\begin{definition}[Liouville field on $\hH$]
\label{def-LF-H}
Let $(h,c)$ be sampled from $\P_{\hH}\times[e^{-Qc}dc]$, and let $\phi(z)=h(z)-2Q\log|z|_++c$ (here $|z|_+:=\max\{|z|,1\}$). Denote the law of $\phi$ by $\LF_{\hH}$.

For $\alpha\in\R$, let $\LF_{\hH}^{(\alpha,i)}(d\phi):=\lim_{\varepsilon\to0}\varepsilon^{\frac{\alpha^2}{2}}e^{\alpha\phi_\varepsilon(i)}\LF_{\hH}(d\phi)$ (the limit exists in the vague topology, see~\cite[Lemma 2.2]{ARS-FZZ}). Here $\phi_\varepsilon(i)$ is the average of $\phi$ over the circle $\partial B_\varepsilon(i)$.
\end{definition}

For a sample $\phi$ from $\LF_{\hH}^{(\alpha,i)}$, we can define its boundary Gaussian multiplicative chaos (GMC) measure $\nu_{\phi}:=\lim\limits_{\varepsilon\to0}\varepsilon^{\frac{\gamma^2}{4}}e^{\frac{\gamma}{2}\phi_\varepsilon(x)}dx$, where $\phi_\varepsilon(x)$ is the average of $\phi$ over the semi-circle $\partial B_\varepsilon(x)\cap\hH$.
Then denote $\{\LF_{\hH}^{(\alpha,i)}(\ell)\}_{\ell>0}$ to be the disintegration of $\LF_{\hH}^{(\alpha,i)}$ over $\nu_\phi(\R)$; i.e.~$\LF_{\hH}^{(\alpha,i)}=\int_0^\infty\LF_{\hH}^{(\alpha,i)}(\ell)d\ell$. We refer readers to see e.g.~\cite[Section 2.2]{ARS-FZZ} for further details. In particular, by~\cite[Lemma 2.7]{ARS-FZZ}, for $\alpha>\frac{\gamma}{2}$,
there are constants $C_{\alpha,\gamma}>0$ such that $|\LF_{\hH}^{(\alpha,i)}(\ell)|=C_{\alpha,\gamma}\ell^{\frac{2}{\gamma}(\alpha-Q)-1}$ for any $\ell>0$.

We also need the Liouville field on the annulus. For $\tau>0$, let $\mathcal{C}_\tau=[0,\tau]\times[0,1]/\mathord\sim$ be the horizontal cylinder with modulus $\tau$, where under $\sim$ we identify $(x,0)$ and $(x,1)$ for each $x\in[0,\tau]$. Let $\P_{\tau}$ be the free boundary GFF on $\mathcal{C}_\tau$
with zero mean on the circle $\{\frac{\tau}{2}\}\times[0,1]/\mathord\sim$.
\begin{definition}[{\cite[Definition 2.2]{ARS-Annulus}}]
\label{def-LF-annulus}
Let $(h,c)$ be sampled from $\P_\tau\times dc$. Then denote the law of $\phi=h+c$ by $\LF_\tau$ (which is an infinite measure on $H^{-1}(\mathcal{C}_\tau)$).
\end{definition}

For a sample $\phi$ from $\LF_\tau$,
we can similarly define its boundary GMC measures $\nu^1_\phi, \nu_\phi^2$ on the two boundaries $\partial_1\mathcal{C}_\tau=\{0\}\times[0,1]\mathord\sim$ and $\partial_2\mathcal{C}_\tau=\{1\}\times[0,1]\mathord\sim$, respectively. We also let $\{\LF_\tau(\ell_1,\ell_2)\}_{\ell_1,\ell_2>0}$ be the disintegration of $\LF_\tau$ over $\nu_\phi^1(\partial\cC_1)$ and $\nu_\phi^2(\partial\cC_2)$, i.e.~$\LF_\tau=\iint_{\R_+^2}\LF_\tau(\ell_1,\ell_2)d\ell_1d\ell_2$. The following result from~\cite{ARS-Annulus} gives the exact solvability of $|\LF_\tau(\ell_1,\ell_2)|$, which is based on~\cite{wu-annulus}.

\begin{proposition}[{\cite[Equation (3.6)]{ARS-Annulus}}]
 \label{prop-LF-integrability}
For $\tau>0$ and $y\in(-1,\frac{4}{\gamma^2})$, we have
$$
\iint_{\R_+^2}\ell_1e^{-\ell_1}\ell_2^{y}|\LF_\tau(\ell_1,\ell_2)|d\ell_1d\ell_2=\frac{\pi\gamma y\Gamma(1+y)}{2\sin(\frac{\gamma^2}{4}\pi y)}e^{\frac{\pi}{4}\gamma^2\tau y^2}.
$$
\end{proposition}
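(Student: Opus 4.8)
The plan is to integrate out the global additive constant $c$ that is built into $\LF_\tau$, which reduces the identity to an exact moment computation for the free-boundary GFF $h\sim\P_\tau$ on $\mathcal{C}_\tau$; that moment then factors into an elementary Gaussian (zero-mode) contribution carrying the modulus dependence $e^{\pi\gamma^2\tau y^2/4}$ and a genuinely non-trivial boundary-GMC factor, which is where the exact solvability of Liouville CFT on the annulus must be invoked. For the first step, with $\phi=h+c$ and $\nu_\phi^i=e^{\gamma c/2}\nu_h^i$, I would substitute $u=e^{\gamma c/2}$ (so $dc=\tfrac2\gamma\tfrac{du}u$) and use $\int_0^\infty u^{y}e^{-au}\,du=\Gamma(1+y)a^{-1-y}$ (valid for $y>-1$, $a>0$; Tonelli applies since all integrands are nonnegative) to get
\[
\iint_{\R_+^2}\ell_1e^{-\ell_1}\ell_2^{y}|\LF_\tau(\ell_1,\ell_2)|\,d\ell_1\,d\ell_2
=\int\nu_\phi^1e^{-\nu_\phi^1}(\nu_\phi^2)^{y}\,\LF_\tau(d\phi)
=\frac{2\Gamma(1+y)}{\gamma}\,\E\!\left[(\nu_h^1)^{-y}(\nu_h^2)^{y}\right].
\]
This extracts the $\Gamma(1+y)$ factor and reduces the claim to $\E[(\nu_h^2/\nu_h^1)^{y}]=\tfrac{\pi\gamma^2 y}{4\sin(\gamma^2\pi y/4)}\,e^{\pi\gamma^2\tau y^2/4}$ for $h\sim\P_\tau$. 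Here $y>-1$ is exactly what is needed for the Gamma integral and for the first moment of $\nu_h^1$ to be finite, while the threshold $y<\tfrac4{\gamma^2}$ will reappear below as the range of finiteness of the positive boundary-GMC moments.

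Second, I would decompose $h=h_0+h^\perp$, where $h_0=(h_0(x))_{x\in[0,\tau]}$ is the circle-average process (a Markov process vanishing on the central circle $\{\tau/2\}\times[0,1]$ under the normalization of Definition~\ref{def-LF-annulus}, in fact a two-sided time-changed Brownian motion) and $h^\perp$ is the lateral field with zero constant mode on every vertical circle, independent of $h_0$. Then $\nu_h^1=e^{\gamma h_0(0)/2}\wt\nu^1$ and $\nu_h^2=e^{\gamma h_0(\tau)/2}\wt\nu^2$ with $(\wt\nu^1,\wt\nu^2)$ a measurable function of $h^\perp$ only, whence
\[
\E[(\nu_h^2/\nu_h^1)^{y}]=\E\!\left[e^{\frac{\gamma y}{2}(h_0(\tau)-h_0(0))}\right]\cdot\E\!\left[(\wt\nu^2/\wt\nu^1)^{y}\right].
\]
By the Markov property of the circle-average process, $h_0(0)$ and $h_0(\tau)$ are independent centered Gaussians, and a standard computation (with the conventions underlying Definition~\ref{def-LF-annulus}) gives $\operatorname{Var}(h_0(\tau)-h_0(0))=2\pi\tau$; hence the first factor equals $e^{\frac{\gamma^2 y^2}{8}\cdot 2\pi\tau}=e^{\pi\gamma^2\tau y^2/4}$, which is precisely the modulus-dependent part of the formula.

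The hard part will be the remaining lateral moment: proving $\E[(\wt\nu^2/\wt\nu^1)^{y}]=\tfrac{\pi\gamma^2 y}{4\sin(\gamma^2\pi y/4)}$ and, in particular, that it does not depend on $\tau$, even though $h^\perp|_{\partial_1\mathcal{C}_\tau}$ and $h^\perp|_{\partial_2\mathcal{C}_\tau}$ are correlated Gaussian fields for every finite $\tau$ (the $n$-th Fourier modes have cross-covariance of order $1/\sinh(2\pi n\tau)$). This is where the integrability of Liouville CFT on the annulus from~\cite{wu-annulus} enters, as implemented in~\cite{ARS-Annulus}: one identifies the joint law of $(\wt\nu^1,\wt\nu^2)$ with the boundary GMC masses appearing in the annulus amplitude, and checks that in the ratio moment the free-field (Dedekind-$\eta$) and conformal-block factors cancel between numerator and denominator, leaving only a single boundary two-point structure constant. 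A clean consistency check that also reveals the form of that constant is the limit $\tau\to\infty$, where $\wt\nu^1$ and $\wt\nu^2$ become independent and each is, up to a deterministic factor, a GMC total mass on the unit circle; Remy's proof of the Fyodorov--Bouchaud formula then gives $\E[(\wt\nu)^{s}]=\Gamma(1-\tfrac{\gamma^2 s}{4})/\Gamma(1-\tfrac{\gamma^2}{4})^{s}$ for $s<\tfrac4{\gamma^2}$, so
\[
\E[(\wt\nu^2/\wt\nu^1)^{y}]=\E[(\wt\nu)^{-y}]\,\E[(\wt\nu)^{y}]=\Gamma\!\Big(1+\tfrac{\gamma^2 y}{4}\Big)\,\Gamma\!\Big(1-\tfrac{\gamma^2 y}{4}\Big),
\]
the normalizing powers of $\Gamma(1-\gamma^2/4)$ cancelling; Euler's reflection formula $\Gamma(1+a)\Gamma(1-a)=\tfrac{\pi a}{\sin\pi a}$ with $a=\gamma^2 y/4$ yields $\tfrac{\pi\gamma^2 y}{4\sin(\gamma^2\pi y/4)}$, and the constraints $|y|<\tfrac4{\gamma^2}$ and $y>-1$ together reproduce exactly the stated range $y\in(-1,\tfrac4{\gamma^2})$.

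Multiplying the three factors $\tfrac{2\Gamma(1+y)}{\gamma}$, $e^{\pi\gamma^2\tau y^2/4}$, and $\tfrac{\pi\gamma^2 y}{4\sin(\gamma^2\pi y/4)}$ gives $\tfrac{\pi\gamma y\Gamma(1+y)}{2\sin(\gamma^2\pi y/4)}e^{\pi\gamma^2\tau y^2/4}$, as asserted. An alternative organization, essentially the one in~\cite{ARS-Annulus}, is to relate $\LF_\tau$ directly to Wu's annulus partition function $Z(\mu_{B,1},\mu_{B,2};\tau)$ with two boundary cosmological constants and to recover the left-hand side as $-\partial_{\mu_{B,1}}$ at $\mu_{B,1}=1$ combined with a Mellin transform in $\mu_{B,2}$, the Mellin transform $\int_0^\infty\mu^{-y-1}e^{-\mu\ell}\,d\mu=\Gamma(-y)\ell^{y}$ (for $y<0$, then analytically continued) supplying the $\Gamma$ and $\pi/\sin$ factors; the main obstacle is the same either way, namely the input from~\cite{wu-annulus} that pins down the annulus amplitude.
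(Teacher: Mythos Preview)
The paper does not prove this proposition: it is quoted as \cite[Equation~(3.6)]{ARS-Annulus} and used as a black-box input. So there is no in-paper argument to compare against; the question is whether your proposal stands on its own.

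Your first two reductions are correct and clean. Integrating out $c$ via $u=e^{\gamma c/2}$ and the Gamma integral is valid for $y>-1$ and yields $\tfrac{2}{\gamma}\Gamma(1+y)\,\E[(\nu_h^1)^{-y}(\nu_h^2)^{y}]$. The radial/lateral split is also correct: with the standard Liouville normalization of the free-boundary GFF the circle-average process on $\mathcal{C}_\tau$ is a two-sided Brownian motion of rate $2\pi$ from the central circle, so $\operatorname{Var}(h_0(\tau)-h_0(0))=2\pi\tau$ and the radial factor is exactly $e^{\pi\gamma^2\tau y^2/4}$.

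The gap is the lateral step. You assert $\E[(\wt\nu^2/\wt\nu^1)^{y}]=\tfrac{\pi\gamma^2 y}{4\sin(\gamma^2\pi y/4)}$ independently of $\tau$, but you do not prove it. This $\tau$-independence is not a structural triviality: the marginal law of $h^\perp$ on a single boundary circle already depends on $\tau$ (the $n$-th Fourier mode has variance proportional to $\coth(2\pi|n|\tau)$), as does the joint law of the two boundary restrictions, so what you are claiming is a genuine cancellation. Your $\tau\to\infty$ computation via Fyodorov--Bouchaud is a consistency check, not a proof at finite $\tau$. And your stated remedy---that ``this is where the integrability from \cite{wu-annulus} enters, as implemented in \cite{ARS-Annulus}''---is circular: the proposition \emph{is} \cite[Equation~(3.6)]{ARS-Annulus}, so deferring the hard step back to that source is not an independent argument. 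The alternative organization you sketch at the end (recovering the left-hand side from Wu's annulus partition function by a derivative in one boundary cosmological constant and a Mellin transform in the other) is essentially how \cite{ARS-Annulus} obtains the formula; your radial/lateral decomposition is an appealing repackaging, but it does not bypass the need for the exact annulus amplitude from \cite{wu-annulus}.
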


For two domains $D,\wt{D}\subset\C$ and a conformal map $g:D\to\wt{D}$, when $h$ is a distribution on $D$, we define $g\bullet h:=h\circ g^{-1}+Q\log|(g^{-1})'|$ (which is a distribution on $\wt D$). Let $\Omega$ be the space of simple loops in $\hH$ surrounding $i$, and $\Conf(\hH,i)$ be the group of conformal automorphisms of $\hH$ that fix $i$. For $(\phi,\eta,g,\theta)\in H^{-1}(\hH)\times\Omega\times \Conf(\hH,i)\times[0,1]$, define a measurable map $F$ by
$$
F(\phi,\eta,g,\theta):=\left((g\circ \psi_\eta^{-1})\bullet \phi|_{D(\eta)}, f_{\eta}^\theta\bullet \phi|_{\hH\backslash\overline{D(\eta)}},\Mod(\hH\backslash\overline{D(\eta)})\right).
$$
Here $\psi_\eta:\hH\to D(\eta)$ is the conformal map such that $\psi_\eta(i)=i$ and $\psi_\eta'(i)>0$, and
$f_{\eta}^\theta:\hH\backslash\overline{D(\eta)}\to\mathcal{C}_\tau$ is the conformal map such that $f_{\eta}^\theta(0)=\theta i$ with $\tau=\Mod(\hH\backslash\overline{D(\eta)})$.

Let $\Haar_{(\hH,i)}$ be the Haar measure on $\Conf(\hH,i)$ such that $|\Haar_{(\hH,i)}|=1$, and let $\Unif_{[0,1]}$ be the uniform measure on $[0,1]$.
The following proposition, which is essentially from~\cite{ARS-Annulus}, describes the law of $\LF_\hH^{(\gamma,i)}$ when cut by a simple loop sampled from $\sm$.

\begin{proposition}\label{prop-sle-weld}
There is a constant $C>0$ such that for any $\ell_1>0$,
$$
F_*\left(\LF_{\hH}^{(\gamma,i)}(\ell_1)\times\sm\times\Haar_{(\hH,i)}\times\Unif_{[0,1]}\right)
=C\int_0^\infty \left(\int_0^\infty \ell_2\LF_{\hH}^{(\gamma,i)}(\ell_2)(d\phi_1)\times\LF_\tau(\ell_2,\ell_1)(d\phi_2)d\ell_2\right)
\eta(2i\tau)d\tau.
$$
Here $F_*$ stands for the pushforward of measures, and we view the right side as a measure on $(\phi_1,\phi_2,\tau)$.
\end{proposition}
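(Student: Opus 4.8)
The plan is to deduce Proposition~\ref{prop-sle-weld} from the conformal welding identity of~\cite{ARS-Annulus}, of which it is a reformulation in the language of Liouville fields and their length-disintegrations. That identity says that conformally welding a quantum disk carrying a $\gamma$-bulk insertion to an independent quantum annulus, along a common boundary arc of matched quantum length, produces a quantum disk carrying a $\gamma$-bulk insertion, and the welding interface is distributed as the $\SLE_{8/3}$ loop measure restricted to loops surrounding the insertion point. First I would state this welding result precisely, paying particular attention to the law of the annulus modulus $\tau$, which carries the weight $\eta(2i\tau)\,d\tau$ (here $\eta$ is the Dedekind eta function); this density is the analytic heart of~\cite{ARS-Annulus}, ultimately resting on the annulus integrability of~\cite{wu-annulus}, and it is cleanest to quote it directly rather than re-derive it. Then I would transfer everything from the disk to the upper half-plane using the conformal map $\Phi:\D\to\hH$ with $\Phi(0)=i$ and the identity $\sm=\Phi_*\SLE_{8/3,\D}^{\lp}$ recorded above, so that $i$ plays the role of the marked bulk point and $\R$ that of the outer boundary.

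The core of the argument is to read the welding backwards and keep track of the conformal gauge. Starting from a quantum disk with a $\gamma$-insertion, outer boundary length $\ell_1$, and an interface loop $\eta$, one cuts along $\eta$: the inside $D(\eta)\ni i$ becomes a quantum disk with a $\gamma$-insertion and boundary length $\ell_2:=\nu_\phi(\eta)$, while the outside $\hH\setminus\overline{D(\eta)}$ becomes a quantum annulus with boundary lengths $\ell_2$ on $\eta$ and $\ell_1$ on $\R$ and modulus $\tau=\Mod(\hH\setminus\overline{D(\eta)})$. To present each piece as an honest field one must choose a conformal uniformization, $(D(\eta),i)\to(\hH,i)$ and $\hH\setminus\overline{D(\eta)}\to\mathcal{C}_\tau$; beyond the normalization fixing the marked point (resp.\ the boundary point $f_\eta^\theta(0)$) this leaves the compact gauge groups $\Conf(\hH,i)$ and the circle rotations of $\mathcal{C}_\tau$, which is exactly why the normalized Haar measures $\Haar_{(\hH,i)}$ and $\Unif_{[0,1]}$ enter $F$: averaging the two uniformizations over these independent gauge choices makes $F$ a bona fide measurable map and produces a pair of embedded surfaces whose conditional law given $(\ell_1,\ell_2,\tau)$ is the independent product appearing on the right-hand side. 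Granting the welding identity fiberwise over $(\ell_1,\ell_2,\tau)$, the pushforward of $\LF_\hH^{(\gamma,i)}(\ell_1)\times\sm\times\Haar_{(\hH,i)}\times\Unif_{[0,1]}$ under $F$ is then the measure on $(\phi_1,\phi_2,\tau)$ in which $\phi_1$ has law $C\LF_\hH^{(\gamma,i)}(\ell_2)$ (with the welding length weight $\ell_2\,d\ell_2$), $\phi_2$ has law $\LF_\tau(\ell_2,\ell_1)$, and $\tau$ has the eta-weighted law, which is precisely the asserted formula, the single constant $C$ absorbing all normalizations.

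The step I expect to be the main obstacle is the bookkeeping hidden in the phrase ``essentially from~\cite{ARS-Annulus}'': one must check that no stray Jacobian factors survive. Three points need care. First, the conformal welding of two length-$\ell_2$ boundaries carries the Radon--Nikodym weight $\ell_2$ relative to $d\ell_2$ (the ``number of ways to match the two boundaries''), and this must be the $\ell_2$ multiplying $\LF_\hH^{(\gamma,i)}(\ell_2)$ on the right. Second, the $\gamma$-insertion has Liouville scaling dimension $\Delta_\gamma=\tfrac{\gamma}{2}\bigl(Q-\tfrac{\gamma}{2}\bigr)=1$, so transporting $\phi|_{D(\eta)}$ by $g\circ\psi_\eta^{-1}$ produces a conformal factor that is a power of $\psi_\eta'(i)$ (the Haar element $g$, being a rotation, contributes nothing); one must verify that the length-disintegration conventions for $\LF_\hH^{(\gamma,i)}(\cdot)$ are arranged so that this factor is consistent with the welding identity and leaves no residual power of $\psi_\eta'(i)$ — this is exactly why Proposition~\ref{prop-sle-weld} is free of any $\psi_\eta'(i)$ term, in contrast with Theorem~\ref{thm-appendix}, where such a term is reintroduced by an explicit reweighting. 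Third, one must correctly identify the density $\eta(2i\tau)\,d\tau$ of the annulus modulus under the $\SLE_{8/3}$ welding, which I would quote from~\cite{ARS-Annulus}. Once these accountings are in place, the proposition follows by applying the welding theorem on each fiber $(\ell_1,\ell_2,\tau)$.
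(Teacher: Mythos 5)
Your proposal is correct and follows essentially the same route as the paper: the paper's proof is likewise a direct citation of the $\SLE_{8/3}$ conformal welding identity from~\cite{ARS-Annulus} (in its Brownian-surface form, via~\cite[Proposition 4.5]{cfsx}), translated into the Liouville-field statement by the uniform embedding theorem~\cite[Theorem 3.4]{ARS-FZZ}. Your bookkeeping points — the welding weight $\ell_2\,d\ell_2$, the fact that $\Delta_\gamma=1$ kills any residual power of $\psi_\eta'(i)$, and the $\eta(2i\tau)\,d\tau$ modulus density — are exactly the accountings implicit in that citation.
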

\begin{proof}
By~\cite{ARS-Annulus}, the $\SLE_{8/3}$ loop cut the Brownian disk into an independent pair of a (smaller) Brownian disk and a Brownian annulus; see~\cite[Proposition 4.5]{cfsx} for the precise statement and proof. The result then follows from that the uniform embedding of the Brownian disk on $\hH$ gives the Liouville field on $\hH$~\cite[Theorem 3.4]{ARS-FZZ}.
\end{proof}

For $\alpha\in\R$, let $\Delta_\alpha=\frac{\alpha}{2}(Q-\frac{\alpha}{2})$. Define the measure $\sm^\alpha$ via $\frac{d\sm^\alpha}{d\sm}(\eta)=\psi_\eta'(i)^{2\Delta_\alpha-2}$, where $\psi_\eta:\hH\to D(\eta)$ is the conformal map as above. The following proposition is obtained from Proposition~\ref{prop-sle-weld} by a standard reweighting argument. Such an argument was first developed in~\cite{AHS-SLE-integrability}, and appeared in many recent papers on the integrability of SLE/CLE, see e.g.~\cite{ARS-FZZ,ACSW24b,ARS-Annulus,nolin2024backboneexponenttwodimensionalpercolation}. 

\begin{proposition}
 \label{prop-sle-reweight}
There is a constant $C>0$ such that for any $\ell_1>0$ and $\alpha\in\R$,
$$
F_*\left(\LF_{\hH}^{(\alpha,i)}(\ell_1)\times\sm^\alpha\times\Haar_{(\hH,i)}\times\Unif_{[0,1]}\right)
=C\int_0^\infty \left(\int_0^\infty \ell_2\LF_{\hH}^{(\alpha,i)}(\ell_2)(d\phi_1)\times\LF_\tau(\ell_2,\ell_1)(d\phi_2)d\ell_2\right)
\eta(2i\tau)d\tau.
$$
As in Proposition~\ref{prop-sle-weld}, we view the right side above as a measure on $(\phi_1,\phi_2,\tau)$.
\end{proposition}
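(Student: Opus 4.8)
The plan is to deduce this from Proposition~\ref{prop-sle-weld} by the reweighting technique of~\cite{AHS-SLE-integrability} (see also~\cite{ARS-FZZ,ACSW24b,ARS-Annulus}): one multiplies both sides of the identity in Proposition~\ref{prop-sle-weld} by a density that converts the $\gamma$-vertex insertion at $i$ carried by the \emph{inside} field into an $\alpha$-vertex insertion, and then bookkeeps how this density reorganizes the two sides. Recall that $\LF_{\hH}^{(\alpha,i)}=\lim_{\varepsilon\to0}\varepsilon^{\alpha^2/2}e^{\alpha\phi_\varepsilon(i)}\LF_{\hH}$, and likewise with $\gamma$ in place of $\alpha$; so, writing $R_\varepsilon(\phi):=\varepsilon^{(\alpha^2-\gamma^2)/2}e^{(\alpha-\gamma)\phi_\varepsilon(i)}$, the measure $\LF_{\hH}^{(\alpha,i)}$ is the measure $\LF_{\hH}^{(\gamma,i)}$ reweighted by $\lim_{\varepsilon\to0}R_\varepsilon$, and the same reweighting takes $\LF_{\hH}^{(\gamma,i)}(\ell)$ to $\LF_{\hH}^{(\alpha,i)}(\ell)$ for each $\ell>0$ since it is local at the interior point $i$ and hence compatible with the disintegration over the boundary length (cf.~\cite{ARS-FZZ}). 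I would apply this reweighting --- by $R_\varepsilon$ evaluated at the inside field $\phi_1$, followed by $\varepsilon\to0$ --- to both sides of Proposition~\ref{prop-sle-weld}, which remain equal after the reweighting.

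On the right-hand side the inside field $\phi_1$ carries the $\gamma$-insertion at $i$, while $\LF_\tau(\ell_2,\ell_1)$, $\ell_2\,d\ell_2$ and $\eta(2i\tau)\,d\tau$ are insensitive to its germ at $i$; hence the reweighting merely replaces $\LF_{\hH}^{(\gamma,i)}(\ell_2)$ by $\LF_{\hH}^{(\alpha,i)}(\ell_2)$ and produces exactly the right-hand side of the claimed identity. On the left-hand side $\phi_1=(g\circ\psi_\eta^{-1})\bullet\phi|_{D(\eta)}$, so near $i$ it equals $m\bullet\phi$ with $m:=g\circ\psi_\eta^{-1}$ (a conformal map fixing $i$, with $m'(i)=g'(i)/\psi_\eta'(i)$) and $\phi\sim\LF_{\hH}^{(\gamma,i)}(\ell_1)$. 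Using $(m\bullet\phi)_\varepsilon(i)=\phi_{\varepsilon/|m'(i)|}(i)-Q\log|m'(i)|$ up to an error that vanishes as $\varepsilon\to0$, a direct computation gives, up to a multiplicative correction tending to $1$,
\[
R_\varepsilon(\phi_1)=|m'(i)|^{\,(\alpha^2-\gamma^2)/2-(\alpha-\gamma)Q}\,R_{\varepsilon/|m'(i)|}(\phi)=|m'(i)|^{-(2\Delta_\alpha-2\Delta_\gamma)}\,R_{\varepsilon/|m'(i)|}(\phi),
\]
i.e.\ the reweighting density transforms at $i$ with conformal weight $2\Delta_\alpha-2\Delta_\gamma$. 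Now every $g\in\Conf(\hH,i)$ is a hyperbolic isometry fixing $i$, so $|g'(i)|=1$ and $|m'(i)|=\psi_\eta'(i)^{-1}$; since $\Delta_\gamma=\tfrac{\gamma}{2}(Q-\tfrac{\gamma}{2})=1$, the factor produced on the left-hand side is precisely $\psi_\eta'(i)^{\,2\Delta_\alpha-2}$, which by definition turns $\sm$ into $\sm^\alpha$. After absorbing this factor, letting $\varepsilon\to0$ turns the remaining $R_{\varepsilon/|m'(i)|}(\phi)\,\LF_{\hH}^{(\gamma,i)}(\ell_1)(d\phi)$ into $\LF_{\hH}^{(\alpha,i)}(\ell_1)(d\phi)$ (the rescaled cutoff $\varepsilon/|m'(i)|=\varepsilon\,\psi_\eta'(i)\le\varepsilon$ still tends to $0$), while the $\Haar_{(\hH,i)}$ and $\Unif_{[0,1]}$ factors pass through untouched. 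So the left-hand side becomes $F_*\big(\LF_{\hH}^{(\alpha,i)}(\ell_1)\times\sm^\alpha\times\Haar_{(\hH,i)}\times\Unif_{[0,1]}\big)$, which finishes the identification; the constant $C$ may change in the process.

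The hard part will be making this limiting reweighting rigorous: $R_\varepsilon$ has no pointwise limit, so one must justify interchanging $\varepsilon\to0$ with the pushforward $F_*$, with the integral against $\sm$, and with the disintegrations over the boundary lengths $\ell_1$ and $\ell_2$. This calls for uniform-integrability and GMC moment bounds near $i$ --- both under $\LF_{\hH}^{(\cdot,i)}$ and, after the welding, for the law of $\phi_1$ near $i$ --- of exactly the kind established in~\cite{AHS-SLE-integrability,ARS-FZZ,ARS-Annulus}; I would invoke those estimates and keep the genuinely new input confined to the conformal-weight bookkeeping above. (The argument applies verbatim for every $\alpha\in\R$, not only $\alpha=\gamma$, as the statement is an equality of possibly infinite measures and needs no finiteness of either side.)
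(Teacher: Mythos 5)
Your proposal is correct and follows essentially the same route as the paper: both reweight the identity of Proposition~\ref{prop-sle-weld} by $\varepsilon^{\frac{1}{2}(\alpha^2-\gamma^2)}e^{(\alpha-\gamma)(\phi_1)_\varepsilon(i)}$ applied to the inside field, use the coordinate-change covariance of the circle average together with $|g'(i)|=1$ to extract the factor $\psi_\eta'(i)^{2\Delta_\alpha-2}$ turning $\sm$ into $\sm^\alpha$, and pass to the limit $\varepsilon\to0$ on each side (the paper handles the limit interchange by citing Lemmas 4.7 and 4.8 of~\cite{ARS-FZZ}, exactly the estimates you propose to invoke). Your conformal-weight bookkeeping matches the paper's.
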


\begin{proof}
For $\phi\in H^{-1}(\hH)$, $\eta\in\Omega$ and $g\in\Conf(\hH,i)$, let $\phi_1=(g\circ \psi_\eta^{-1})\bullet \phi|_{D(\eta)}$. Recall that for $\varepsilon>0$, $(\phi_1)_\varepsilon(i)$ is the average of $\phi_1$ over $\partial B_\varepsilon(i)$.
By~\cite[Lemma 4.8]{ARS-FZZ},
as $\varepsilon\to0$, we have
$$
\begin{aligned}
&F_*\left(\varepsilon^{\frac{1}{2}(\alpha^2-\gamma^2)}e^{(\alpha-\gamma)(\phi_1)_\varepsilon(i)}\LF_{\hH}^{(\gamma,i)}(\ell_2)(d\phi)\times\sm(d\eta)\times\Haar_{(\hH,i)}\times\Unif_{[0,1]}\right)\\
=~&F_*\left(\varepsilon^{\frac{1}{2}(\alpha^2-\gamma^2)}e^{(\alpha-\gamma)(\phi\circ\psi_\eta\circ g^{-1})_\varepsilon(i)}\LF_{\hH}^{(\gamma,i)}(\ell_2)(d\phi)\times|\psi_\eta'(i)|^{(\alpha-\gamma)Q}\sm(d\eta)\times\Haar_{(\hH,i)}\times\Unif_{[0,1]}\right)\\
\to~& F_*\left(\LF_{\hH}^{(\alpha,i)}(\ell_2)\times\sm^\alpha\times\Haar_{(\hH,i)}\times\Unif_{[0,1]}\right).
\end{aligned}
$$
On the other hand, by Proposition~\ref{prop-sle-weld} and~\cite[Lemma 4.7]{ARS-FZZ}, we know that
$$
\begin{aligned}
&F_*\left(\varepsilon^{\frac{1}{2}(\alpha^2-\gamma^2)}e^{(\alpha-\gamma)(\phi_1)_\varepsilon(i)}\LF_{\hH}^{(\gamma,i)}(\ell_1)(d\phi)\times\sm(d\eta)\times\Haar_{(\hH,i)}\times\Unif_{[0,1]}\right)\\
=~&C\int_0^\infty \left(\int_0^\infty \varepsilon^{\frac{1}{2}(\alpha^2-\gamma^2)}e^{(\alpha-\gamma)(\phi_1)_\varepsilon(i)}\ell_2\LF_{\hH}^{(\gamma,i)}(\ell_2)(d\phi_1)\times\LF_\tau(\ell_2,\ell_1)(d\phi_2)d\ell_2\right)
\eta(2i\tau)d\tau\\
\to~& C\int_0^\infty \left(\int_0^\infty \ell_2\LF_{\hH}^{(\alpha,i)}(\ell_2)(d\phi_1)\times\LF_\tau(\ell_2,\ell_1)(d\phi_2)d\ell_2\right)
\eta(2i\tau)d\tau
\end{aligned}
$$
as $\varepsilon\to0$. Combined, we conclude.
\end{proof}

Now we finish the proof of Theorem~\ref{thm-appendix}.

\begin{proof}[Proof of Theorem~\ref{thm-appendix}]
Let $f:\R_+\to\R_+$ be a measurable function. From Proposition~\ref{prop-sle-reweight}, we have
\begin{equation}
\label{eq-appendix-1}
\begin{aligned}
& F_*\left(\LF_{\hH}^{(\alpha,i)}(\ell_1)\times f(\Mod(\hH\backslash\overline{D(\eta)}))\sm^\alpha(d\eta)\times\Haar_{(\hH,i)}\times\Unif_{[0,1]}\right)\\
=~&C\int_0^\infty \left(\int_0^\infty \ell_2\LF_{\hH}^{(\alpha,i)}(\ell_2)(d\phi_1)\times\LF_\tau(\ell_2,\ell_1)(d\phi_2)d\ell_2\right)
\eta(2i\tau)f(\tau)d\tau.
\end{aligned}
\end{equation}
Taking the total mass on both sides of~\eqref{eq-appendix-1}, we obtain that
\begin{equation}
\label{eq-appendix-2}
\ell_1^{\frac{2}{\gamma}(\alpha-Q)-1}\int f\left(\Mod(\hH\backslash\overline{D(\eta)})\right)|\psi_\eta'(i)|^{2\Delta_\alpha-2}\sm(d\eta)=C\iint_{\R_+^2}\ell_2^{\frac{2}{\gamma}(\alpha-Q)}|\LF_\tau(\ell_2,\ell_1)|\eta(2i\tau)f(\tau)d\ell_2d\tau;
\end{equation}
here we use that $|\LF_{\hH}^{(\alpha,i)}(\ell)|=C_{\alpha,\gamma}\ell^{\frac{2}{\gamma}(\alpha-Q)-1}$ for $\alpha>\frac{\gamma}{2}$.
Then integrating both sides of~\eqref{eq-appendix-2} with respect to $\1_{\ell_1>0}\ell_1e^{-\ell_1}d\ell_1$, by using Proposition~\ref{prop-LF-integrability}, we find
\begin{equation*}
\begin{aligned}
&\Gamma\left(\frac{2}{\gamma}(\alpha-Q)+1\right)\int f\left(\Mod(\hH\backslash\overline{D(\eta)})\right)|\psi_\eta'(i)|^{2\Delta_\alpha-2}\sm(d\eta)\\
=~&C\iiint_{\R_+^3}\ell_2^{\frac{2}{\gamma}(\alpha-Q)}\ell_1e^{-\ell_1}|\LF_\tau(\ell_2,\ell_1)|d\ell_1d\ell_2\eta(2i\tau)f(\tau)d\tau\\
=~&C'\Gamma\left(\frac{2}{\gamma}(\alpha-Q)+1\right)\frac{(\alpha-Q)}{\sin(\frac{\gamma}{2}\pi(\alpha-Q))}
\int_0^\infty e^{(\alpha-Q)^2\pi\tau}\eta(2i\tau)f(\tau)d\tau
\end{aligned}
\end{equation*}
for some constant $C'>0$. 
Then we conclude by taking $\lambda=2\Delta_\alpha-2$. 
\end{proof}

\def\cprime{$'$}

\end{document}